\theoremstyle{plain}
\newtheorem{thm}{\protect\theoremname}
  \theoremstyle{plain}
  \newtheorem*{thm*}{\protect\theoremname}
\theoremstyle{plain}
\newtheorem{lem}[thm]{\protect\lemmaname}
\theoremstyle{plain}
\newtheorem{prop}[thm]{\protect\propositionname}
\providecommand{\lemmaname}{Lemma}
\providecommand{\theoremname}{Theorem}
\providecommand{\propositionname}{Proposition}
\newcommand*{\bC}{\mathbf{C}}
\newcommand*{\bCi}{\mathbf{C}_{\infty}}
\newcommand*{\bD}{\mathbb{D}}
\newcommand*{\dist}{\mathrm{dist}}
\newcommand*{\inter}{\mathrm{Int}}
\newcommand*{\constgg}{C_{6}} 
\newcommand*{\cfour}{C_{7}} 
\newcommand*{\cfive}{C_{8}} 
\newcommand*{\csix}{C_{10}} 
\newcommand*{\cthirteen}{C_{4}} 
\newcommand*{\cfourteen}{C_{5}} 
\newcommand*{\cnine}{C_{13}} 
\newcommand*{\csixteen}{C_{11}} 
\newcommand*{\cseventeen}{C_{12}} 
\newcommand*{\cnineteen}{C_{9}} 
\newcommand*{\ctwentyone}{C_{16}} 
\newcommand*{\ctwentytwo}{C_{17}} 
\newcommand*{\crbone}{C_{1}} 
\newcommand*{\crbtwo}{C_{2}} 
\newcommand*{\crbthree}{C_{3}} 
\newcommand*{\csharpone}{C_{19}} 
\newcommand*{\csharptwo}{C_{21}} 
\newcommand*{\csharpthree}{C_{20}} 
\newcommand*{\cdeltathree}{C_{18}} 
\newcommand*{\clogh}{C_{14}} 
\date{}
\begin{document}

\title{Bernstein type inequalities for rational functions on analytic curves and arcs}
\author{Sergei Kalmykov and Béla Nagy}

\maketitle


\begin{abstract}
Borwein and Erdélyi proved a Bernstein type inequality for rational functions on the unit circle and on the real line.
Here we establish asymptotically sharp extensions of their
inequalities for rational functions  on analytic Jordan arcs
and curves.
In the proofs key roles are played by Borwein-Erdélyi inequality on the unit circle, Gonchar-Grigorjan type
estimate of the norm of holomorphic part of meromorphic functions and
Totik’s construction of fast decreasing polynomials.

Keywords:  rational functions, Bernstein-type inequalities, Borwein-Erd\'{e}lyi inequality,
conformal mappings, Green function

Classification (MSC 2010): 41A17, 30C20, 30E10
\end{abstract}

\section{Introduction}
Inequalities for polynomials and rational functions have rich history and numerous applications in different branches of mathematics, in particular in approximation theory (see, for example, \cite{BorweinErdelyi}, \cite{MMR}  and references therein).
One of the best known inequalities is Bernstein inequality for the derivative of polynomial on the interval $[-1,1]$. 
This inequality was generalized and improved in several directions, for instance, 
instead of one interval it was considered on compact subsets of the real line (see \cite{Totik2001}, \cite{Baran}) and on circles (\cite{NT2013},\cite{NT2014}), on lemniscates \cite{N2005}, 
on systems of Jordan curves \cite{NT2005}, and more recently, on analytic Jordan arcs \cite{JMAA}.
For rational functions, Bernstein type inequalities were shown 
for a circle \cite{BorweinErdelyipaper}, on compact subsets of the real line and  circle (see \cite{BorweinErdelyipaper}, \cite{Lukashov2004}, \cite{DubininKalmykov}). 
For improvements of some of such  inequalities, see \cite{Dubinin} and references therein.  

The aim of this paper is to extend  the mentioned above Borwein-Erd\'{e}lyi inequality for rational functions from a circle to an arbitrary analytic Jordan curve when poles are from a given compact set away from the curve. 
The obtained inequality is asymptotically sharp. 
As a consequence we get a similar estimate on an analytic arc as well. 
All the results are formulated in terms of normal derivatives of Green functions of corresponding domains. 
For the necessary background on potential theory, we refer to 
\cite{Ransford} and \cite{SaffTotik}.
The basic theme of this paper is similar to a
recently developed method in \cite{JMAA}, but most of the
details are rather different.
This approach is based on using the Borwein-Erd\'{e}lyi inequality as a model case, estimates for Green's functions, fast decreasing polynomials, Gonchar-Grigorjan
estimate of the norm of holomorphic part of meromorphic functions, and an appropriate interpolation; for the case of an analytic arc we apply  ``open-up''   mapping (a rational function).
Although Bernstein
inequality was developed to
prove inverse theorems in approximation theory, we use direct
approximation theorems to establish Theorem \ref{thm:analrational}.

The structure of the paper is the following: in Section \ref{sec:statement} we formulate the statements of new results, in Section \ref{sec:proofone} auxiliary facts are collected and the main theorem is proved, in Section 4 we consider the case of one analytic arc, the last section is devoted to sharpness.

\section{Statements of the new results}
\label{sec:statement}

We denote the unit disk by $\bD=\{z:|z|<1\}$,
$\bD^*=\{z:|z|>1\}\cup\{\infty\}$ is called exterior of the unit disk and
$\bCi=\bC\cup\{\infty\}$ denotes the extended complex plane.
We frequently use $g_D(z,\alpha)$ for Green's function of domain $D$ with pole at $\alpha\in D$.

\begin{thm}
\label{thm:analrational}
Let $\Gamma\subset\bC$ be an analytic Jordan curve, $u_{0}\in\Gamma$. 
Let $G_1\subset \bC$ be the interior of $\Gamma$, $G_2$ be the exterior domain $G_2:=\bC_\infty \setminus \left( \Gamma \cup G_1\right)$
and let $Z\subset G_1 \cup G_2$ be a closed set. 
Denote the two normals to $\Gamma$ at $u_{0}$ by $n_{1}\left(u_{0}\right)$
and $n_{2}\left(u_{0}\right)$, $n_{2}\left(u_{0}\right)=-n_{1}\left(u_{0}\right)$, where
$n_1\left(u_{0}\right)$ and $n_2\left(u_{0}\right)$ are pointing inward and outward respectively.

Then, for any rational function $f\left(u\right)$ with poles
in $Z$ only, we have
\begin{multline*}
\left|f'\left(u_{0}\right)\right|
\le
\left(1+o\left(1\right)\right)\left\Vert f\right\Vert_{\Gamma}\\
\cdot\max\left(
\sum_{\alpha}
\frac{\partial}{\partial n_{1}\left(u_{0}\right)}
g_{G_1}\left(u_{0},\alpha\right),\sum_{\beta}
\frac{\partial}{\partial n_{2}\left(u_{0}\right)}
g_{G_2}\left(u_{0},\beta\right)\right)
\end{multline*}
where the sum with $\alpha$ (or $\beta$) is taken over all poles of $f$ in $G_1$ (or in $G_2$, respectively), counting multiplicities,
and $o\left(1\right)$ denotes an error term that depends on $\Gamma$,
$u_{0}$ and $Z$, tends to $0$ as the total degree of $f$ tends
to infinity and is independent of $f$ itself.
 \end{thm}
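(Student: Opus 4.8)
The plan is to transplant the problem to the unit circle via the two conformal maps associated with $G_1$ and $G_2$, reduce to the Borwein--Erdélyi inequality there, and control the distortion by analyticity of $\Gamma$ plus the Gonchar--Grigorjan estimate and fast decreasing polynomials. Concretely, let $\Phi_1\colon G_1\to\bD$ and $\Phi_2\colon G_2\to\bD^*$ be Riemann maps; since $\Gamma$ is analytic, both extend conformally across $\Gamma$ to a neighborhood, so on $\Gamma$ they are biholomorphic with nonvanishing derivative, and $|\Phi_j'(u_0)| = \partial g_{G_j}(u_0,\cdot)/\partial n_j$ applied suitably — more precisely $\partial_{n_j} g_{G_j}(u_0,\alpha) = |\Phi_j'(u_0)|\,\partial_n g_{\bD}(\Phi_j(u_0),\Phi_j(\alpha))$ for a pole $\alpha$, and $\partial_n g_{\bD}(\zeta,a) = (1-|a|^2)/|\zeta-a|^2$ on $|\zeta|=1$. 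So the right-hand side is, up to the conformal factor $|\Phi_j'(u_0)|$, exactly the Borwein--Erdélyi sum for the transplanted poles.

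The first step is to split $f$ according to its poles: write $f = f_1 + f_2$ where $f_1$ has all poles in $G_1$ and $f_2$ all poles in $G_2$ (partial fractions), but the difficulty is that this decomposition is not norm-controlled on $\Gamma$ — $\|f_1\|_\Gamma$ and $\|f_2\|_\Gamma$ need not be comparable to $\|f\|_\Gamma$. This is exactly where the Gonchar--Grigorjan type estimate enters: it bounds the norm of the holomorphic (in $G_1$, say) part of the meromorphic function $f$ on $\Gamma$ by $C\log(\deg f)\,\|f\|_\Gamma$, so $\|f_j\|_\Gamma \le C\log n\,\|f\|_\Gamma$. The logarithmic loss is affordable because we will win an exponential (or at least polynomial) factor from fast decreasing polynomials, as follows. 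For $f_1$ (poles in $G_1\cap Z$, a compact subset of $G_1$ at positive distance from $\Gamma$), $F_1 := f_1\circ\Phi_1^{-1}$ is a rational function on $\bD$ with poles in the compact set $\Phi_1(Z\cap G_1)\subset\bD$; apply Borwein--Erdélyi on $\bT$ to get $|F_1'(\Phi_1(u_0))| \le \|F_1\|_{\bT}\sum_\alpha \partial_n g_{\bD}(\Phi_1(u_0),\Phi_1(\alpha))$, and pull back. The chain rule converts $F_1'$ to $f_1'(u_0)/\Phi_1'(u_0)$ and the Green-function sum on $\bD$ becomes the $G_1$-sum after multiplying by $|\Phi_1'(u_0)|$, so we land on exactly the $\alpha$-term of the max, but with $\|F_1\|_{\bT} = \|f_1\|_\Gamma \le C\log n\,\|f\|_\Gamma$ instead of $\|f\|_\Gamma$.

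To remove the $\log n$ factor — this is the main obstacle and the technical heart — I would not estimate $|f'(u_0)|$ by $|f_1'(u_0)| + |f_2'(u_0)|$ directly (that costs a factor $2$, not $1+o(1)$, and also the $\log n$). Instead, following the method of \cite{JMAA}, multiply $f$ by a well-chosen fast decreasing polynomial $P_n$ (Totik's construction) which is $\approx 1$ near $u_0$ and exponentially small on the part of $\Gamma$ "far" from $u_0$, so that on one side the meromorphic splitting can be localized: arrange that $P_n f_1$ essentially carries all the derivative at $u_0$ from the $G_1$-poles while the cross term and the $f_2$-contribution, after multiplication by the decaying polynomial and using the Gonchar--Grigorjan bound with its mere $\log n$ growth, are $o(1)\|f\|_\Gamma$. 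The key quantitative balance is: Gonchar--Grigorjan gives growth $\log n$, fast decreasing polynomials give decay $\exp(-c\sqrt n)$ or $\exp(-n^\gamma)$ on a fixed arc, and their product is $o(1)$; meanwhile $P_n(u_0)\to 1$ and $P_n'(u_0) = o(n)$-type control ensures the "good" term is $(1+o(1))$ times what we want and the correction from differentiating $P_n$ is negligible compared with $\deg(f)\cdot\|f\|$. Then take the maximum over which side ($G_1$ vs $G_2$) contributes the larger Green-function sum — one uses the decreasing polynomial adapted to whichever side is being estimated — to obtain the stated $\max(\cdot,\cdot)$. The places needing care are: (i) uniformity of all constants and error terms in $f$ given only $\Gamma, u_0, Z$ (the maps $\Phi_j$, the distortion estimates, and Totik's polynomials depend only on this data); (ii) the analytic continuation of $\Phi_j$ across $\Gamma$ so that $\Phi_j'(u_0)\neq 0$ and the transplanted poles stay in a fixed compact subset of $\bD$ (resp. $\bD^*$); and (iii) checking that the total degree of the transplanted rational function is comparable to $\deg f$ so "$n\to\infty$" is the same limit on both sides.
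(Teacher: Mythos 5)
Your overall strategy does match the paper's in outline (transplantation by the two normalized conformal maps, the splitting $f=f_1+f_2$ with the Gonchar--Grigorjan $\log n$ loss, a fast decreasing factor to beat that loss, and the observation that the derivative correction coming from this factor is $o(n)$ and hence absorbable), but two genuine gaps remain. First, $F_1=f_1\circ\Phi_1^{-1}$ is \emph{not} a rational function: it is only meromorphic on the neighbourhood of $\overline{\bD}$ to which the conformal map extends, so the Borwein--Erd\'elyi inequality cannot be applied to it as written. Converting the transplanted function into an honest rational function with poles in prescribed locations is precisely the technical heart of the paper's proof: one takes the principal part $\varphi_{1r}$ of $(Q\cdot f_1)\circ\Phi_1$ and then approximates both the holomorphic remainder $\varphi_{1e}$ and $(Q\cdot f_2)\circ\Phi_1$ by rational interpolants $r_{1,N_5}$, $r_{2,N_5}$ whose poles lie in $\Phi_2^{-1}(Z_2)$ and whose number of extra poles is only $o(n)$, with sup-norm errors $o(1)\|f\|_\Gamma$ and with derivatives matched at $1$. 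None of this appears in your plan, and without it the key inequality has nothing rational to be applied to.

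Second, and more seriously, estimating the two sides separately and then ``taking the maximum over which side is being estimated'' cannot produce the stated $\max$. Since $f'(u_0)=f_1'(u_0)+f_2'(u_0)$ and both terms can simultaneously be of size $n\|f\|_\Gamma$, two one-sided estimates only give the \emph{sum} of the two Green-function sums (with the $\log n$ factors on top), which is strictly weaker than the claimed bound. Your suggested remedy --- that after multiplying by the fast decreasing factor the $f_2$-contribution becomes $o(1)\|f\|_\Gamma$ --- is false: the factor equals $1$ at $u_0$, so $(Q\cdot f_2)'(u_0)$ is not small and is never discarded in the paper; it is carried into the single rational function $h=\varphi_{1r}+r_{1,N_5}+r_{2,N_5}$, which has poles on \emph{both} sides of the circle, satisfies $h'(1)=((Q\cdot f)\circ\Phi_1)'(1)$ and $\|h\|_{\partial\bD}\le(1+o(1))\|f\|_\Gamma$, and to which Borwein--Erd\'elyi is applied \emph{once}; the max arises exactly from this single application, with the $o(n)$ extra poles absorbed via the uniform positive lower bound on the normal derivatives, which makes the max itself of order $n$. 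A further point in the same direction: the fast decreasing factor must be a rational function with poles in $Z_2$ (the paper's Theorem~\ref{thm:fast_decr}), not a polynomial, since a polynomial factor introduces a high-order pole at $\infty$ that need not belong to $Z$ and could not be accounted for among the $o(n)$ admissible extra poles.
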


Applying an appropriate open-up mapping, we obtain

\begin{thm}
\label{thm:analrationalarc}
Let $\Gamma_0\subset\bC$ be an analytic Jordan arc, $z_{0}\in\Gamma_0$ not endpoint. 
Denote the two normals to $\Gamma_0$ at $z_{0}$ by $n_{1}\left(z_{0}\right)$
and $n_{2}\left(z_{0}\right)$, $n_{2}\left(z_{0}\right)=-n_{1}\left(z_{0}\right)$.
Let $G:=\bC_\infty \setminus \Gamma_0$ be 
the complementing domain
and let $Z\subset G$ be a closed
set. 
Then, for any rational function $f\left(z\right)$ with poles
in $Z$ only, we have
\begin{multline*}
\left|f'\left(z_{0}\right)\right|
\le
\left(1+o\left(1\right)\right)
\left\Vert f\right\Vert_{\Gamma_0}
\\
\cdot\max\left(
\sum_{\beta}
\frac{\partial}{\partial n_{1}\left(z_{0}\right)}
g_{G}\left(z_{0},\beta\right),
\sum_{\beta}
\frac{\partial}{\partial n_{2}\left(z_{0}\right)}
g_{G}\left(z_{0},\beta\right)
\right)
\end{multline*}
where the sum with $\beta$ is taken over all poles of $f$ in $G$ counting multiplicities,
and $o\left(1\right)$ denotes an error term that depends on $\Gamma_0$,
$z_{0}$ and $Z$, tends to $0$ as the total degree of $f$ tends
to infinity and is independent of $f$ itself.
\end{thm}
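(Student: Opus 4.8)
The plan is to reduce Theorem~\ref{thm:analrationalarc} to Theorem~\ref{thm:analrational} by means of an ``open-up'' rational map, following the scheme already used in \cite{JMAA} for the polynomial case. Concretely, I would choose a rational function $R$ of degree $2$ (for instance, a suitable conformal-type map built from the analytic parametrization of $\Gamma_0$) that maps $\bC_\infty$ onto $\bC_\infty$ so that the preimage $R^{-1}(\Gamma_0)$ is an analytic Jordan \emph{curve} $\Gamma$, with $R$ a $2$-to-$1$ branched cover sending $\Gamma$ onto $\Gamma_0$ and the endpoints of $\Gamma_0$ being the two critical values of $R$. Such an open-up map exists precisely because $\Gamma_0$ is an analytic arc: one extends the analytic parametrization to a conformal map of a neighborhood and composes with a Joukowski-type quadratic. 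Let $G_1, G_2$ be the two components of $\bC_\infty\setminus\Gamma$; then $R$ maps each of $G_1,G_2$ conformally onto $G=\bC_\infty\setminus\Gamma_0$.

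The next step is transport of the data. Pick $u_0\in\Gamma$ with $R(u_0)=z_0$; since $z_0$ is not an endpoint, $u_0$ is not a critical point, so $R$ is a local biholomorphism near $u_0$ and $R'(u_0)\neq0$. Given a rational $f$ on $\bC$ with poles in $Z\subset G$, form $F:=f\circ R$, which is rational with poles in $Z':=R^{-1}(Z)\subset G_1\cup G_2$, a closed set, and $\deg F = 2\deg f$ (so the degree still tends to infinity). Also $\|F\|_\Gamma = \|f\|_{\Gamma_0}$. Apply Theorem~\ref{thm:analrational} to $F$ at $u_0$: since the chain rule gives $F'(u_0)=f'(z_0)R'(u_0)$, we get
\[
|f'(z_0)|\,|R'(u_0)| \le (1+o(1))\,\|f\|_{\Gamma_0}\,\max\!\left(\sum_{\alpha}\frac{\partial}{\partial n_1(u_0)}g_{G_1}(u_0,\alpha),\ \sum_{\beta}\frac{\partial}{\partial n_2(u_0)}g_{G_2}(u_0,\beta)\right),
\]
where the poles $\alpha$ (resp. $\beta$) of $F$ in $G_1$ (resp. $G_2$) are exactly the preimages under $R|_{G_1}$ (resp. $R|_{G_2}$) of the poles of $f$ in $G$, with multiplicity.

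The final step is to identify the right-hand side in terms of $g_G$ and the normals at $z_0$. Using conformal invariance of the Green function, $g_{G_1}(u_0,\alpha)=g_G(R(u_0),R(\alpha))=g_G(z_0,R(\alpha))$ as $u_0\to$ boundary, and summing over $\alpha\in R^{-1}(\beta)\cap G_1$ reproduces, for each pole $\beta$ of $f$, a single term $g_G(z_0,\beta)$; likewise for $G_2$. For the normal derivatives one uses that a conformal map $\Phi:G_1\to G$ with $\Phi(u_0)=z_0$ satisfies $\frac{\partial}{\partial n_1(u_0)}g_{G_1}(u_0,\alpha)=|\Phi'(u_0)|\,\frac{\partial}{\partial n(z_0)}g_G(z_0,\Phi(\alpha))$, where $n(z_0)$ is the inward normal of $G$ along $\Phi$; and here $\Phi=R|_{G_1}$, so $|\Phi'(u_0)|=|R'(u_0)|$ and the normal $n_1(u_0)$ maps to one of $n_1(z_0),n_2(z_0)$, while $n_2(u_0)$ (via $R|_{G_2}$) maps to the other. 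Hence the whole bracket equals $|R'(u_0)|$ times $\max\big(\sum_\beta \partial_{n_1(z_0)}g_G(z_0,\beta),\ \sum_\beta \partial_{n_2(z_0)}g_G(z_0,\beta)\big)$; dividing by $|R'(u_0)|>0$ yields the claim, and since the $o(1)$ in Theorem~\ref{thm:analrational} depends only on $\Gamma, u_0, Z'$ — all determined by $\Gamma_0, z_0, Z$ — the error term has the stated dependence. The main obstacle is the careful bookkeeping in this last step: making sure the two-sheeted structure distributes the two components $G_1,G_2$ onto the two normal directions at $z_0$ correctly (rather than doubling one side), and verifying the normal-derivative transformation law at a boundary point where $R$ is conformal but the Green functions are only Hölder up to $\Gamma$; the analyticity of $\Gamma_0$ (hence of $\Gamma$ and $R$ near $u_0$) is what makes this rigorous.
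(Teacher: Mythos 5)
Your proposal is correct and follows essentially the same route as the paper: an open-up by a degree-two rational (Joukowski-type) map sending an analytic Jordan curve $\Gamma$ two-to-one onto $\Gamma_0$, application of Theorem \ref{thm:analrational} to $f\circ R$ at a preimage $u_0$ of $z_0$, and the transformation law for normal derivatives of Green's functions (the paper's Propositions \ref{prp:openup} and \ref{prp:greenopenup}, quoted from \cite{JMAA}) together with the chain rule and division by $\left|R'(u_0)\right|$. Your bookkeeping of how the two sides $G_1$, $G_2$ land on the two normals $n_1(z_0)$, $n_2(z_0)$ agrees with the paper's discussion of the normal-vector correspondence, so no gap remains.
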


Theorem \ref{thm:analrational} is asymptotically sharp as the following
theorem shows.
\begin{thm}
\label{thm:analratsharp} 
We use the notations of Theorem \ref{thm:analrational}.

Then
there exists a sequence of rational functions $\left\{f_{n}\right\}$ with $\deg\left(f_{n}\right)=n\rightarrow\infty$
with poles in $Z$
such that
\begin{multline*}
\left|f_{n}'\left(u_{0}\right)\right|
\ge 
\left(1-o\left(1\right)\right)
\left\Vert f_{n}\right\Vert_{\Gamma}
\\ \cdot
\max\left(
\sum_{\alpha}
\frac{\partial}{\partial n_{1}\left(u_{0}\right)}
g_{G_1}\left(u_{0},\alpha\right),\sum_{\beta}
\frac{\partial}{\partial n_{2}\left(u_{0}\right)}
g_{G_2}\left(u_{0},\beta\right)
\right)
\end{multline*}
where $o(1)$ tends to $0$ as $\deg(f_n)=n\rightarrow\infty$ and it depends on $\Gamma$, $u_0$ and $Z$ too
and the sum with $\alpha$ (or $\beta$) is taken over all poles of $f$ in $G_1$ (or in $G_2$, respectively), counting multiplicities.
\end{thm}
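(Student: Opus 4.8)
The plan is to build the extremal sequence by transplanting the known extremal rational functions for the unit circle, where the Borwein--Erd\'elyi inequality is sharp, back to $\Gamma$ via the conformal maps, and then to split according to which of the two sums in the $\max$ is the larger one. Assume without loss of generality that the maximum is attained by the interior term, i.e.\ $\sum_\alpha \frac{\partial}{\partial n_1(u_0)} g_{G_1}(u_0,\alpha)$; the other case is symmetric with $G_1$ replaced by $G_2$. Fix a conformal map $\Phi_1 \colon G_1 \to \bD$ with $\Phi_1(u_0)$ a boundary point; since $\Gamma$ is analytic, $\Phi_1$ extends analytically across $\Gamma$ to a neighborhood, and $|\Phi_1'(u_0)| = \frac{\partial}{\partial n_1(u_0)} g_{G_1}(u_0,\infty_1)$-type normalization relates the normal derivative of the Green function to $|\Phi_1'|$. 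The key identity to exploit is the conformal invariance of the Green function: $g_{G_1}(u,\alpha) = g_{\bD}(\Phi_1(u),\Phi_1(\alpha))$, whence for the normal derivative at the boundary point $u_0$,
\[
\frac{\partial}{\partial n_1(u_0)} g_{G_1}(u_0,\alpha)
= |\Phi_1'(u_0)| \cdot \frac{\partial}{\partial n} g_{\bD}\bigl(\Phi_1(u_0),\Phi_1(\alpha)\bigr)
= |\Phi_1'(u_0)| \cdot \frac{1-|\Phi_1(\alpha)|^2}{|\Phi_1(u_0)-\Phi_1(\alpha)|^2}.
\]

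Next I would recall the sharpness mechanism of the Borwein--Erd\'elyi inequality on $\bT$: for a fixed finite pole set inside and outside $\bD$, there is a Blaschke-type product $B$ (a finite product of M\"obius factors, modulus $1$ on $\bT$) for which $|B'(1)|$ equals $\|B\|_{\bT}$ times the corresponding sum of normal derivatives of $g_{\bD}$ and $g_{\bD^*}$; more precisely one takes $B(w) = \prod (w-a_j)/(1-\bar a_j w)$ with the $a_j = \Phi_1(\alpha)$ ranging over the images of the prescribed poles, and the derivative at the point $w_0 = \Phi_1(u_0)$ is then $\sum_j \frac{1-|a_j|^2}{|w_0 - a_j|^2}$ in modulus, with $\|B\|_{\bT}=1$. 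Pulling this back, $f := B \circ \Phi_1$ is a rational function whose poles are exactly the preimages under $\Phi_1$ of the $1/\bar a_j$ — but these are the reflections, not the original $\alpha$'s, so some care is needed: one should instead start from the poles we are allowed to use (a prescribed cofinal family of points in $Z$), set $a_j := \Phi_1(\alpha_j)$, and form $f$ so that its poles in $G_1$ are precisely the $\alpha_j$; then $|f'(u_0)| = |B'(w_0)| \, |\Phi_1'(u_0)| = \|f\|_\Gamma \cdot \sum_j |\Phi_1'(u_0)| \frac{1-|a_j|^2}{|w_0-a_j|^2}$, which by the displayed identity is exactly $\|f\|_\Gamma \cdot \sum_\alpha \frac{\partial}{\partial n_1(u_0)} g_{G_1}(u_0,\alpha)$. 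This already gives equality, not just asymptotic equality, if we are free to choose $f$ with \emph{all} poles in $G_1$ and if $Z$ actually meets $G_1$ in a set rich enough to realize the sum; the $o(1)$ and the degree-$n$ requirement enter because $Z$ is an arbitrary closed set and because we must let $\deg f_n = n \to \infty$, so in general one repeats chosen poles with multiplicity, or distributes $n$ poles among finitely many admissible points of $Z \cap G_1$, and passes to the limit.

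The remaining work is bookkeeping to meet the precise quantifiers. First, by the definition of the $\max$ and continuity/monotonicity of the normal-derivative sums in the poles, for each $n$ choose poles $\alpha_1^{(n)},\dots,\alpha_n^{(n)} \in Z \cap G_1$ (with multiplicity) so that $\sum_{j} \frac{\partial}{\partial n_1(u_0)} g_{G_1}(u_0,\alpha_j^{(n)})$ is within $o(1)$ of the supremum of such sums over degree-$n$ configurations, which in turn matches the quantity in the statement for the competing optimal $f$'s (this is where I invoke that the supremum in Theorem \ref{thm:analrational} is approached). Then set $w_j^{(n)} = \Phi_1(\alpha_j^{(n)})$, let $B_n(w) = \prod_{j=1}^n \frac{w - w_j^{(n)}}{1 - \overline{w_j^{(n)}} w}$, and $f_n = B_n \circ \Phi_1$, so $\deg f_n = n$, all poles of $f_n$ lie in $Z$, $\|f_n\|_\Gamma = \|B_n\|_{\bT} = 1$, and $|f_n'(u_0)| = |\Phi_1'(u_0)| \cdot |B_n'(w_0)| = \sum_j \frac{\partial}{\partial n_1(u_0)} g_{G_1}(u_0,\alpha_j^{(n)})$ by the conformal identity above, which is $(1-o(1))$ times the claimed maximum. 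I expect the only genuinely delicate point to be the approximation of the extremal sum by admissible degree-$n$ pole configurations drawn from the possibly sparse closed set $Z$ — i.e.\ showing that the supremum of $\sum_{j=1}^n \frac{\partial}{\partial n_1(u_0)} g_{G_1}(u_0,\alpha_j)$ over $\alpha_j \in Z \cap G_1$ genuinely realizes, up to $o(1)$, the same quantity that bounds $|f'(u_0)|/\|f\|_\Gamma$ for arbitrary competitors in Theorem \ref{thm:analrational}; this is essentially a matter of noting that both are controlled by the single point of $Z \cap \overline{G_1}$ maximizing the normal derivative (poles pushed toward $\Gamma$ give the largest contribution, and repetition is free), so the two optimization problems have the same value in the limit. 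Everything else — analyticity of $\Phi_1$ across $\Gamma$, the boundary formula for $\partial g_{\bD}/\partial n$, and $\|B_n\|_{\bT}=1$ — is standard.
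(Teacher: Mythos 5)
The central step of your construction fails: $f_n := B_n \circ \Phi_1$ is \emph{not} a rational function unless $\Gamma$ is a circle (so that $\Phi_1$ is a M\"obius map). It is only a meromorphic function on $G_1$, extendable across $\Gamma$ by analyticity, with poles at the chosen $\alpha_j^{(n)}$; but the theorem demands genuine rational functions with poles in $Z$, and this is exactly where the real work lies. The paper's proof starts the same way you do (a Blaschke product $h_n$ with poles at points of $\Phi_1^{-1}(Z\cap G_1)$, for which the Borwein--Erd\'elyi inequality is an equality), but then it must ``re-rationalize'' the transplant $h_n\circ\Phi_1^{-1}$: it splits off the sum of principal parts $f_{1,n}$ (a rational function with poles at the prescribed points of $Z\cap G_1$), observes that the remainder $\varphi_e$ is holomorphic on a fixed neighborhood $G_1^+$ of $\overline{G_1}$, and approximates $\varphi_e$ by a rational function of degree $N_6+2=O(n^{4/5})=o(n)$ with a single pole at a fixed $\zeta_0\in Z\cap G_2$ (via a M\"obius map $\psi$ sending $\zeta_0\to\infty$ and interpolation with a Fekete polynomial times $(w-w_0)^2$). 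The Gonchar--Grigorjan estimate controls $\|f_{1,n}\|_\Gamma$ (hence $\|\varphi_e\|$) by $O(\log n)$, the sharpness part of the Bernstein--Walsh lemma gives the exponential lower bound on the Fekete polynomial that makes the interpolation error $O(\log n\, e^{-cN_6})=o(1)$ in sup norm on $\Gamma$, and the double zero at $w_0$ forces exact matching of the derivative at $u_0$. None of this bookkeeping is optional: without it you have no admissible competitor $f_n$, and your claim ``$f:=B\circ\Phi_1$ is a rational function'' is simply false for a general analytic $\Gamma$.

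A secondary point: the difficulty you single out as delicate --- approximating a supremum of $\sum_j \frac{\partial}{\partial n_1(u_0)} g_{G_1}(u_0,\alpha_j)$ over degree-$n$ configurations drawn from a possibly sparse $Z$ --- is not required by the statement. The sums on the right-hand side are taken over the poles of $f_n$ itself, so any fixed choice of $n$ points of $Z\cap G_1$ (repetition allowed) works; the only care needed is that the extra exterior pole at $\zeta_0$ introduced by the rationalization has multiplicity $o(n)$, so it cannot overtake the interior sum (which is of order $n$ by the uniform lower bound on the normal derivatives) in the $\max$ for large $n$. So the genuinely delicate point is the one you passed over, and the one you flagged is vacuous.
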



A particular sequence of rational functions showing sharpness in Theorem \ref{thm:analrationalarc} can be obtained from Theorem 2 from \cite{JMAA} in standard way if we take a point from $Z$ and apply a fractional-linear mapping which maps this point to infinity.

The error term $o(1)$ in Theorems \ref{thm:analrational} and \ref{thm:analrationalarc} cannot be dropped in general,
even for polynomials, see \cite{N2005}.

\section{Some background results and the proof of Theorem \ref{thm:analrational} }
\label{sec:proofone}

\subsection{A ``rough'' Bernstein type inequality}

We  need the following ``rough'' Bernstein type inequality on
Jordan curves.
\begin{prop}
\label{prop:rough_berns}
Let $\Gamma$ be a $C^{2}$ smooth Jordan curve and $Z\subset\bCi\setminus\Gamma$
be a closed set. 
Then, there exists $\crbone >0$ such that for any
$u_{0}\in\Gamma$ and any rational function $f$ with poles in $Z$
only, we have
\[
\left|f'\left(u_{0}\right)\right|
\le 
\crbone \deg\left(f\right)\left\Vert f\right\Vert_{\Gamma}.
\]
\end{prop}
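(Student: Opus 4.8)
The plan is to reduce the rational case to the polynomial case by a partial-fraction type argument combined with the Gonchar--Grigorjan estimate mentioned in the introduction. First I would normalize: assume $\|f\|_\Gamma = 1$ and write $n = \deg(f)$. Since the poles of $f$ lie in the fixed closed set $Z \subset \bCi \setminus \Gamma$, there is a fixed positive distance $\delta_0 := \dist(Z,\Gamma) > 0$, and $\Gamma$ being $C^2$ smooth has bounded curvature; these two facts are what the constant $\crbone$ will depend on. I would split $f = f_1 + f_2$ into the part $f_1$ holding the poles inside $\Gamma$ (in $G_1$) and the part $f_2$ holding the poles in the exterior component (in $G_2$, including possibly a pole at $\infty$, i.e. a polynomial part), normalizing so that $f_2(\infty)$ absorbs the constant. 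The Gonchar--Grigorjan type estimate gives $\|f_1\|_\Gamma + \|f_2\|_\Gamma \le C \log(n+1)\,\|f\|_\Gamma$, or in the sharper forms available when $\Gamma$ is analytic/smooth, a bound with a controlled growth factor; in any case $\|f_i\|_\Gamma \le C_0 (n+1)$ trivially suffices for a ``rough'' inequality, so one does not even need the logarithmic refinement here.

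Next I would estimate $|f_i'(u_0)|$ for each piece separately by a Cauchy-integral / reflection argument. For $f_2$, which is holomorphic in $\overline{G_1}$ and on a neighborhood of $\Gamma$ reaching distance $\sim \delta_0$ inward (since its poles are at distance $\ge \delta_0$ from $\Gamma$), I would use a Cauchy integral over a curve $\Gamma_\rho$ obtained by pushing $\Gamma$ inward by a fixed amount $\rho = c\,\delta_0$: then $|f_2'(u_0)| \le \frac{1}{2\pi}\oint_{\Gamma_\rho} \frac{|f_2(\zeta)|}{|\zeta - u_0|^2}\,|d\zeta| \le C(\delta_0,\Gamma)\,\|f_2\|_{\Gamma_\rho}$. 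To bound $\|f_2\|_{\Gamma_\rho}$ by $\|f_2\|_\Gamma$ one invokes a Bernstein--Walsh / maximum-principle comparison: $f_2$ is analytic in $G_1$, so $\|f_2\|_{\Gamma_\rho} \le \|f_2\|_\Gamma$ directly by the maximum principle (no growth factor at all for this piece). Symmetrically, $f_1$ is holomorphic in $\overline{G_2}$ (including at $\infty$ after the normalization), and pushing $\Gamma$ outward by $\rho$ gives $|f_1'(u_0)| \le C(\delta_0,\Gamma)\,\|f_1\|_\Gamma$ again by the maximum principle in $G_2$. Combining, $|f'(u_0)| \le |f_1'(u_0)| + |f_2'(u_0)| \le C(\delta_0,\Gamma)\big(\|f_1\|_\Gamma + \|f_2\|_\Gamma\big) \le C(\delta_0,\Gamma)\,(n+1)\,\|f\|_\Gamma$, which is the claim with $\crbone$ depending only on $\Gamma$ and $Z$.

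The one technical point requiring care — and the main obstacle — is that pushing the curve $\Gamma$ inward (or outward) by a fixed distance $\rho$ must produce a genuine Jordan curve $\Gamma_\rho$ that still stays at positive distance from $Z$ and on which the estimates $|\zeta - u_0| \ge c\rho$ hold uniformly in $u_0 \in \Gamma$; this is where $C^2$ smoothness enters, guaranteeing a tubular neighborhood of $\Gamma$ of some fixed width $\rho_0 > 0$ foliated by the level curves of the signed distance function, so one simply takes $\rho = \min(\rho_0, \delta_0)/2$. An alternative that sidesteps constructing $\Gamma_\rho$ explicitly: conformally map a fixed two-sided collar neighborhood of $\Gamma$ and work on a genuine annulus, or invoke the already-known polynomial ``rough Bernstein'' inequality on $C^2$ curves together with the decomposition above — but the direct Cauchy-integral estimate on a pushed curve is the cleanest. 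I expect no difficulty beyond this; the logarithmic Gonchar--Grigorjan refinement is not needed at the ``rough'' level since any polynomial factor in $n$ is allowed.
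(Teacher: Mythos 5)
Your reduction to the two pieces $f_1,f_2$ breaks down at the Cauchy-integral step, and the statement you extract from it is in fact false. For $f_2$ (poles in $G_2$) you integrate over a curve pushed \emph{inward} by a fixed $\rho$; but then $u_0\in\Gamma$ lies outside $\Gamma_\rho$ while $f_2$ is holomorphic inside it, so $\frac{1}{2\pi i}\oint_{\Gamma_\rho}f_2(\zeta)(\zeta-u_0)^{-2}\,d\zeta=0$ — it does not represent $f_2'(u_0)$ (also, holomorphy of $f_2$ extends $\delta_0$ \emph{outward}, not inward; it holds on all of $\overline{G_1}$ anyway). To enclose $u_0$ the contour must run on the side where the poles of that piece live (outward for $f_2$; for $f_1$ you would need the exterior Cauchy formula over an inward curve), and there the maximum principle gives nothing: by the Bernstein--Walsh estimate the piece can be as large as $\Vert f_2\Vert_\Gamma\exp\bigl(\sum_\beta g_{G_2}(u,\beta)\bigr)\approx e^{c\,n\rho}$ at fixed distance $\rho$ from $\Gamma$. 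Concretely, your intermediate claim $|f_i'(u_0)|\le C(\delta_0,\Gamma)\Vert f_i\Vert_\Gamma$ with a degree-independent constant cannot be true: a Blaschke-type product of degree $N$ with all poles at one fixed point of $Z$ (say $\Gamma$ the unit circle, pole at $2$) has norm $1$ on $\Gamma$ but derivative of order $N$ everywhere on $\Gamma$. Consequently the factor $\deg(f)$ must come from the derivative step, not from the norm of the decomposition; in your write-up it comes from ``$\Vert f_i\Vert_\Gamma\le C_0(n+1)\Vert f\Vert_\Gamma$ trivially,'' which is not trivial (the available bound is the Gonchar--Grigorjan estimate $C\log n\,\Vert f\Vert_\Gamma$, itself a theorem), and even granting it, a repaired version of your argument yields $Cn\log n$ (or $Cn^2$ with your claimed bound), not the asserted $\crbone\deg(f)$.

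The paper's proof needs no decomposition at all, and the key device you are missing is to let the contour shrink with the degree: apply Cauchy's formula on the circle $|u-u_0|=1/\deg(f)$ and bound $|f|$ there by Bernstein--Walsh in whichever domain $G_j$ the point lies, summing Green's functions over the poles of $f$ in $G_j$. Since the poles stay in the compact set $Z$ and $\Gamma$ is $C^2$, each $g_{G_j}(u,\alpha)$ is $\le C\,\mathrm{dist}(u,\Gamma)\le C|u-u_0|$ uniformly (comparison of Green's functions with different poles), so the total exponent is $\le C N_j/\deg(f)\le C$ and the Bernstein--Walsh factor is $O(1)$; the Cauchy kernel on the shrunken circle then supplies exactly one factor $\deg(f)$. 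If you insist on the $f_1+f_2$ scheme, shrinking the radius to $1/n$ repairs the contour problem, but you still inherit the Gonchar--Grigorjan $\log n$ and miss the stated linear bound.
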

\begin{proof}
This quickly follows from  an idea attributed to Szegő (Cauchy
integral formula around the point, see e.g. \cite{Widom}, p. 133). 
Fix $u_{0}\in\Gamma$ and 
and put 
$G_1:=\mathrm{Int} \Gamma$,
$G_2:=\bCi \setminus \left( \Gamma \cup \mathrm{Int}\Gamma\right)$
and let
$\delta_{0}:=\dist\left(Z,\Gamma\right)$.
Now fix $\alpha_{j}^{\left(0\right)}\in G_{j}$ arbitrarily,
$j=1,2$. 
Let $N_{1}$ be the total order of poles of $f$ in $G_{1}$, $N_{2}$
be the total order of poles of $f$ in $G_{2}$. 
Obviously, $N_{1}+N_{2}=\deg\left(f\right)$.

Since the poles are from a compact set,
it is standard (see, e.g. \cite{multiplygg}, Lemma 1)  that there exist
$\delta_{0,1}>0$ and $\crbtwo>0$ such that 
if $\dist\left(u,\Gamma\right)<\delta_{0,1}$, $u\in G_j$ and
$\alpha_{1},\alpha_{2}\in Z\cap G_{j}$, $j=1,2$, then 
\[
\frac{1}{\crbtwo}
\le
\frac{g_{G_{j}}\left(u,\alpha_{1}\right)}{g_{G_{j}}\left(u,\alpha_{2}\right)}
\le \crbtwo.
\]

Let $\gamma:=\left\{ u:\left|u-u_{0}\right|=1/\deg\left(f\right)\right\} $
(assume $\deg(f)>2/\delta_{0}$) and we use  Bernstein-Walsh estimate
for $f$ on $G_{1}$ and $G_{2}$ 
(see, e.g. \cite{Ransford}, Theorem 5.5.7, p. 156):
\[
\left|f\left(u\right)\right|\le\left\Vert f\right\Vert_{\Gamma}
\exp\left(\sum_{\alpha}g_{G_{j}}\left(u,\alpha\right)\right)
\]
where the sum is taken over all poles
$\alpha$ of $f$ in $G_{j}$,
each of them appearing as many times as the order of the pole of $f$
at $\alpha$ and $u\in G_j$. 
It is again standard (see, e.g., Lemma 1 from \cite{multiplygg}) 
that there exist $\delta_{0,2}>0$ and $\crbthree>0$ such that if $\dist\left(u,\Gamma\right)<\delta_{0,2}$,
then $g_{G_{j}}\left(u,\alpha_{j}^{\left(0\right)}\right)
\le
\crbthree\dist\left(u,\Gamma\right)\le \crbthree\left|u-u_{0}\right|$.
This way we can estimate the sum in the previous displayed formula
as follows
\begin{multline*}
\sum_{\alpha}g_{G_{j}}\left(u,\alpha\right)
\le
\sum_{\alpha} \crbtwo
g_{G_{j}}\left(u,\alpha_{j}^{\left(0\right)}\right)
\le
\sum_{\alpha} \crbtwo \crbthree \left|u-u_{0}\right|
\\ =
\crbtwo \crbthree N_{j}\left|u-u_{0}\right|
=
\crbtwo \crbthree \frac{N_{j}}{\deg\left(f\right)}\le
\crbtwo \crbthree.
\end{multline*}

We apply Cauchy integral  formula
\begin{multline*}
\left|f'\left(u_{0}\right)\right|
=
\left|\frac{1}{2\pi i}
\int_{\gamma}\frac{f\left(u\right)}{\left(u_{0}-u\right)^{2}}du\right|
\le
\frac{1}{2\pi}\int_{\gamma}\left|\frac{f\left(u\right)}{\left(u_{0}-u\right)^{2}}\right|\left|du\right|
\\ \le
\frac{1}{2\pi}2\pi\frac{1}{\deg( f)}\frac{\left\Vert f\right\Vert_{\Gamma}\exp\left(\crbtwo \crbthree\right)}
{\left(\deg(f)\right)^{-2}}
=
\left\Vert f\right\Vert_{\Gamma}\deg(f)\exp\left(\crbtwo \crbthree \right).
\end{multline*}

The proposition is proved with 
$\crbone =\exp\left(\crbtwo \crbthree\right)$
which is independent of $f$, $\deg( f)$ and $u_{0}$.
\end{proof}

\subsection{Conformal mappings on simply connected domains}

Recall, for a given curve $\Gamma$, $G_1$ denotes the interior of $\Gamma$ and $G_2$ denotes the unbounded component of $\bC_\infty\setminus \Gamma$.

As earlier, $\bD=\left\{ v:\ \left|v\right|<1\right\} $
and $\bD^{*}=\left\{ v:\ \left|v\right|>1\right\} \cup\left\{ \infty\right\} $.
With these notations, $\partial G_{1}=\partial G_{2}$. Using Kellogg-Warschawski
theorem (see e.g. \cite{Pommerenke} p. 49, Theorem 3.6), if the boundary
is $C^{1,\alpha}$ smooth, then the Riemann mappings of $\bD,\bD^{*}$
onto $G_{1},G_{2}$ respectively and their derivatives can be extended
continuously to the boundary.

Under analyticity assumption, we can compare the Riemann mappings
as follows.
\begin{prop}
\label{prop:green_deriv_g_one_g_two}Let $u_{0}\in\partial G_{1}=\partial G_{2}$
be fixed. Then there exist two Riemann mappings $\Phi_{1}:\bD\rightarrow G_{1}$,
$\Phi_{2}:\bD^{*}\rightarrow G_{2}$ such that $\Phi_{j}\left(1\right)=u_{0}$
and $\left|\Phi_{j}'\left(1\right)\right|=1$, $j=1,2$. 

If $\partial G_{1}=\partial G_{2}$ is analytic, then there exist
$0\le r_{1}<1<r_{2}\le\infty$ such that $\Phi_{1}$ extends to $D_{1}:=\left\{ v:\ \left|v\right|<r_{2}\right\} $,
$G_{1}^{+}:=\Phi_{1}\left(D_{1}\right)$ and $\Phi_{1}:D_{1}\rightarrow G_{1}^{+}$
is a conformal bijection, and similarly, $\Phi_{2}$ extends to $D_{2}:=\left\{ v:\ \left|v\right|>r_{1}\right\} \cup\left\{ \infty\right\} $,
$G_{2}^{+}:=\Phi_{2}\left(D_{2}\right)$ and $\Phi_{2}:D_{2}\rightarrow G_{2}^{+}$
is a conformal bijection. \end{prop}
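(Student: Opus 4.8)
The plan is to build the two normalized Riemann maps by hand and then continue them analytically across $\partial\bD$ using Schwarz reflection.

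First I would establish the normalization. Start from an arbitrary conformal bijection $\Psi_1:\bD\to G_1$ (Riemann mapping theorem). Since $\Gamma=\partial G_1$ is analytic it is in particular a Jordan curve, so Carath\'{e}odory's theorem lets $\Psi_1$ extend to a homeomorphism of $\overline{\bD}$ onto $\overline{G_1}$; I pick $\zeta_1\in\partial\bD$ with $\Psi_1(\zeta_1)=u_0$ and pre-compose with the rotation $v\mapsto\zeta_1 v$ so that now $\Psi_1(1)=u_0$. By the Kellogg-Warschawski theorem quoted above (applicable since an analytic curve is $C^{1,\alpha}$), $\Psi_1'$ extends continuously and zero-free to $\partial\bD$, so $c:=|\Psi_1'(1)|\in(0,\infty)$. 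The hyperbolic automorphisms $\phi_a(v)=(v-a)/(1-av)$, $a\in(-1,1)$, of $\bD$ fix $1$ and satisfy $\phi_a'(1)=(1+a)/(1-a)$, which runs over all of $(0,\infty)$; I choose $a$ with $(1+a)/(1-a)=1/c$ and set $\Phi_1:=\Psi_1\circ\phi_a$, which gives $\Phi_1(1)=u_0$ and $|\Phi_1'(1)|=1$. The map $\Phi_2:\bD^{*}\to G_2$ is produced the same way, working with the automorphisms of $\bD^{*}$ (equivalently, conjugating by $v\mapsto1/v$). This finishes the first assertion.

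For the analytic extension, fix $\zeta\in\partial\bD$ and let $w=\Phi_1(\zeta)\in\Gamma$. Analyticity of $\Gamma$ supplies a conformal map $\psi$ from a neighbourhood $U$ of $w$ onto a neighbourhood of $0$ sending $\Gamma\cap U$ into $\bR$ and $G_1\cap U$ into the upper half-plane. Then $\psi\circ\Phi_1$ is holomorphic on the part of a small disc about $\zeta$ contained in $\bD$, continuous up to the boundary arc and real there, so Schwarz reflection continues it analytically across $\partial\bD$ near $\zeta$, and hence so does $\Phi_1=\psi^{-1}\circ(\psi\circ\Phi_1)$. Covering $\partial\bD$ by finitely many such arcs, I obtain a holomorphic extension of $\Phi_1$ to an annulus $\{1-\varepsilon<|v|<1+\varepsilon\}$, and hence to $D_1:=\{|v|<r_2\}$ with $r_2:=1+\varepsilon>1$. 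Next I would check that, possibly after shrinking $r_2$, this extension is injective: the reflection identity shows it carries $\{1<|v|<r_2\}$ into $G_2$, which is disjoint from $G_1=\Phi_1(\bD)$ and from $\Gamma=\Phi_1(\partial\bD)$; nonvanishing of $\Phi_1'$ on $\partial\bD$ makes the extension locally injective near $\partial\bD$; and a compactness-and-contradiction argument upgrades this to injectivity on a thin annulus about $\partial\bD$. Combining these facts gives that $\Phi_1:D_1\to G_1^{+}:=\Phi_1(D_1)$ is a conformal bijection. The same argument applied to $\Phi_2$ — where only an inward continuation across $\partial\bD$ is needed, since $\infty$ is already an interior point of $\bD^{*}$ — produces $r_1<1$ and the conformal bijection $\Phi_2:D_2=\{|v|>r_1\}\cup\{\infty\}\to G_2^{+}$.

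I expect the only genuinely delicate step to be the last one, upgrading the local analytic continuation into a conformal bijection onto its image; in particular the compactness argument for injectivity in a full annular neighbourhood of $\partial\bD$ is the one place requiring an argument by contradiction. Everything else — the Riemann mapping theorem, Carath\'{e}odory's and Kellogg-Warschawski's theorems, the one-parameter family of automorphisms used to normalize $|\Phi_j'(1)|$, and the local reflection — is routine.
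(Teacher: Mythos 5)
Your argument is correct: the normalization via disk automorphisms fixing $1$ together with Kellogg--Warschawski, and the extension via Schwarz reflection in local analytic charts plus the compactness argument for injectivity on a thin annulus, is exactly the standard proof of this statement. The paper itself gives no proof but simply cites Proposition 7 of \cite{JMAA}, and your self-contained reflection-principle argument follows the same route as that cited result, so there is nothing essentially different to compare.
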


This proposition is Proposition 7 in \cite{JMAA}.

From now on, we fix such two conformal mappings. These mappings and domains are depicted on figure \ref{fig:analext}.
\begin{figure}
\begin{center}
\includegraphics[keepaspectratio,width=\textwidth]{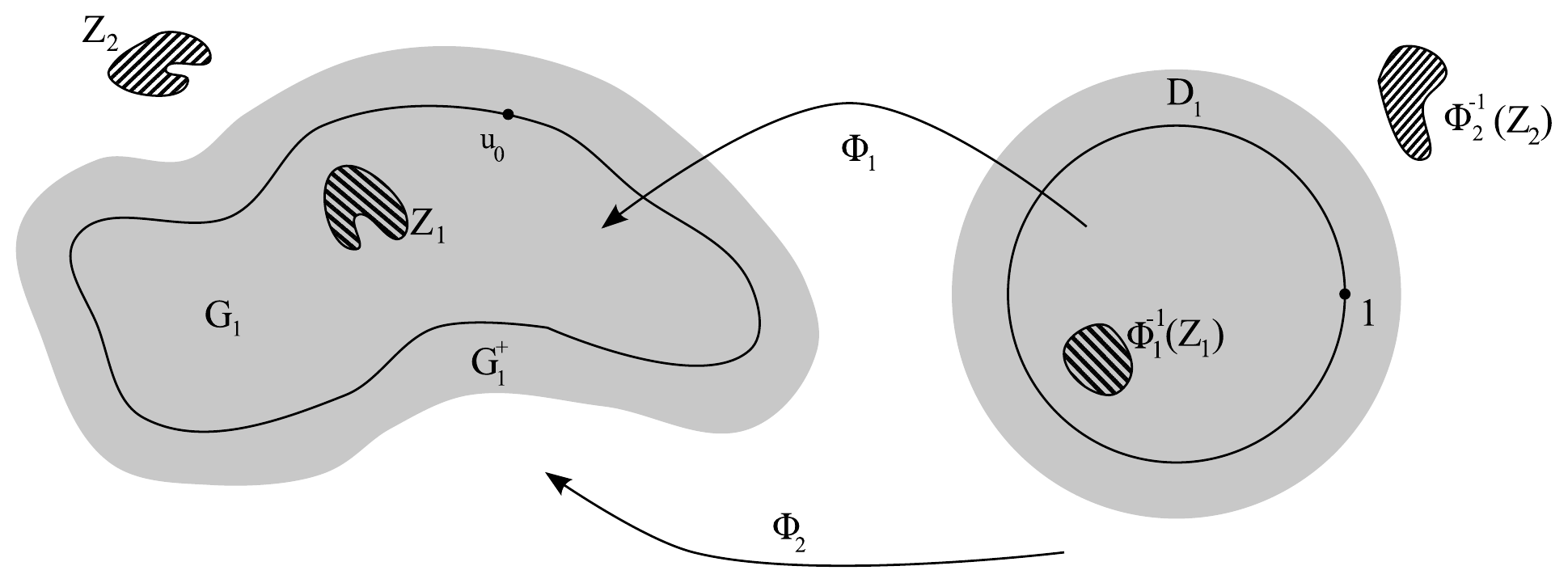}
\end{center}
\caption{The two conformal mappings $\Phi_1$, $\Phi_2$, the domain $D_1$ and the possible location of poles}
\label{fig:analext}
\end{figure}
We may assume that $D_1$ and $Z_2$ are of positive distance from one another (by slightly decreasing $r_1$, if necessary).

Denote the normal vector (of unit length) to $\Gamma$ at $u_0$ 
or $\partial\bD$ at $1$
pointing inward
by $n_1\left(u_0\right)$ or
$n_1(1)$ respectively.
Similarly, the outward normal vectors are denoted by $n_2\left(u_0\right)$
and $n_2(1)$.

\begin{prop}
\label{prop:green_deriv_unitdisk}
The following hold for arbitrary $\alpha\in G_1$, $\beta \in G_2$
with $\alpha':=\Phi_1^{-1}(\alpha)$, $\beta':=\Phi_2^{-1}(\beta)$
if $\beta'\ne\infty$
\begin{gather*}
\frac{\partial}{\partial n_{1}\left(u_{0}\right)}
g_{G_{1}}\left(u_{0},\alpha\right)
=
\frac{\partial}{\partial n_{1}\left(1\right)}
g_{\bD}\left(1,\alpha'\right)
=
\frac{1-\left|\alpha'\right|^{2}}{\left|1-\alpha'\right|^{2}},
\\
\frac{\partial}{\partial n_{2}\left(u_{0}\right)}
g_{G_{2}}\left(u_{0},\beta\right)
=
\frac{\partial}{\partial n_{2}\left(1\right)}
g_{\bD^{*}}\left(1,\beta'\right)
=
\frac{\left|\beta'\right|^{2}-1}{\left|1-\beta'\right|^{2}},
\end{gather*}
and if $\beta'=\infty$, then
\[
\frac{\partial}{\partial n_{2}\left(u_{0}\right)}
g_{G_{2}}\left(u_{0},\beta\right)
=
\frac{\partial}{\partial n_{2}\left(1\right)}
g_{\bD^{*}}\left(1,\infty\right)=1.
\]
\end{prop}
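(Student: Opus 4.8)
The plan is to prove Proposition \ref{prop:green_deriv_unitdisk} in two stages: first, the conformal invariance of the normal derivative of Green's functions under the maps $\Phi_1,\Phi_2$ (which gives the first equality in each line), and second, the explicit computation of the normal derivative of the Green's function of the disk and of its exterior (which gives the second equality).

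\textbf{Conformal invariance.} Since $g_{G_1}(\cdot,\alpha)$ is harmonic in $G_1\setminus\{\alpha\}$ with a logarithmic pole at $\alpha$ and vanishes on $\partial G_1$, and $\Phi_1$ is a conformal bijection from a neighborhood of $\overline{\bD}$ (namely $D_1$) onto a neighborhood of $\overline{G_1}$ with $\Phi_1(1)=u_0$, the standard transformation rule for Green's functions gives $g_{\bD}(v,\alpha')=g_{G_1}(\Phi_1(v),\Phi_1(\alpha'))$ for $v$ near $1$. Because $\Phi_1$ extends conformally past the boundary with $\Phi_1'(1)\ne 0$ and $|\Phi_1'(1)|=1$, it maps the inward unit normal $n_1(1)$ at $1$ to a vector of unit length along the inward normal direction $n_1(u_0)$ at $u_0$ (the normal maps to the normal because $\Phi_1$ is conformal and maps $\partial\bD$ to $\Gamma$). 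Applying the chain rule to the directional derivative, and using that $g_{\bD}(v,\alpha')$ vanishes to first order on $\partial\bD$ so that only the normal component contributes, yields $\frac{\partial}{\partial n_1(u_0)}g_{G_1}(u_0,\alpha)=|\Phi_1'(1)|\,\frac{\partial}{\partial n_1(1)}g_{\bD}(1,\alpha')=\frac{\partial}{\partial n_1(1)}g_{\bD}(1,\alpha')$. The same argument applies verbatim to $\Phi_2$, $G_2$, $\bD^*$ and the outward normals, including the case $\beta'=\infty$ (where $g_{\bD^*}(\cdot,\infty)$ is still harmonic in $\bD^*\setminus\{\infty\}$ and $\Phi_2$ is conformal at $\infty$ in the sphere).

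\textbf{Explicit computation on the disk.} For the disk, $g_{\bD}(v,\alpha')=\log\left|\frac{1-\overline{\alpha'}v}{v-\alpha'}\right|$. I would compute $\frac{\partial}{\partial n_1(1)}g_{\bD}(1,\alpha')$ by writing the inward normal at $v=1$ as the direction $-1$ (i.e. differentiate along $v=1-t$, $t\to 0^+$, or equivalently use the formula for the Poisson kernel, since $\frac{\partial}{\partial n_1(1)}g_{\bD}(1,\alpha')$ is precisely the Poisson kernel $P(1,\alpha')=\frac{1-|\alpha'|^2}{|1-\alpha'|^2}$ — this is the classical relation between the normal derivative of the Green's function and the harmonic measure/Poisson kernel). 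For the exterior $\bD^*$, one uses the inversion $w\mapsto 1/w$ conjugating $\bD^*$ to $\bD$: $g_{\bD^*}(v,\beta')=g_{\bD}(1/v,1/\beta')$, so the normal derivative picks up a factor from the derivative of $1/v$ at $v=1$, which has modulus $1$, giving $\frac{|\beta'|^2-1}{|1-\beta'|^2}$; for $\beta'=\infty$ this reduces to $\frac{1/|1|^2 \cdot(\ldots)}{\ldots}$, more directly $g_{\bD^*}(v,\infty)=\log|v|$ whose radial (outward normal) derivative at $|v|=1$ is exactly $1$.

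\textbf{Main obstacle.} The computations are routine; the one point requiring care is the precise bookkeeping of how the directional derivative transforms — specifically verifying that $\Phi_j$ maps the unit normal at $1$ to the unit normal at $u_0$ (direction and that the length factor is exactly $|\Phi_j'(1)|=1$ by the normalization in Proposition \ref{prop:green_deriv_g_one_g_two}), and that the tangential component of $\nabla g$ vanishes on the boundary so no cross term appears. I would also want to state cleanly the identification of $\partial_n g$ with the Poisson kernel to avoid a direct messy differentiation of $\log\left|\frac{1-\overline{\alpha'}v}{v-\alpha'}\right|$ at $v=1$. Everything else is a direct substitution.
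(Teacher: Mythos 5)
Your proposal is correct and follows essentially the same route as the paper, which gives no separate argument but cites Proposition 8 of \cite{JMAA}: conformal invariance of the Green's function under the analytically extended maps $\Phi_j$ with the normalization $\left|\Phi_j'(1)\right|=1$ sending normal to normal, followed by the explicit Poisson-kernel computation on $\bD$, the inversion for $\bD^*$, and $g_{\bD^*}(v,\infty)=\log|v|$ for the pole at infinity. (Only note that in the chain rule the factor should appear as $\frac{\partial}{\partial n_1(1)}g_{\bD}(1,\alpha')=\left|\Phi_1'(1)\right|\frac{\partial}{\partial n_1(u_0)}g_{G_1}(u_0,\alpha)$ rather than on the side you wrote it, which is immaterial here since $\left|\Phi_1'(1)\right|=1$.)
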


This proposition is a slight generalization of Proposition 8 from \cite{JMAA} with the same proof.

\subsection{Fast decreasing rational functions with prescribed poles}

The next result is based on a general construction of fast
decreasing polynomials by Totik, see \cite{Totik}, Corollary
4.2 and Theorem 4.1 too.


\begin{thm}
\label{thm:fast_decr}Let $\tilde{K}\subset\bC$ be a compact set,
$\tilde{u}\in\partial\tilde{K}$ be a boundary point. Assume that
$\tilde{K}$ satisfies the touching-outer-disk condition, that is,
there exists a closed disk (with positive radius) such that its intersection
with $\tilde{K}$ is $\left\{ \tilde{u}\right\} $. Let $Z_{2}^{*}\subset\bCi\setminus\tilde{K}$
be a finite set and $\tau>1$.

Then there exist $\cthirteen,\cfourteen>0$ with the following properties.
For any given multiplicity function $m:Z_{2}^{*}\rightarrow\left\{ 1,2,\ldots\right\} $
introduce $\tilde{n}:=\sum_{\alpha}m\left(\alpha\right)$ where the sum is
taken for $\alpha\in Z_{2}^{*}$ and there exists a rational function
$Q$ such that $Q\left(\tilde{u}\right)=1$, $\left\Vert Q\right\Vert _{\tilde{K}}\le1$,
$Q$ has poles at the points of $Z_{2}^{*}$ only and the order of
the pole of $Q$ at $\alpha\in Z_{2}^{*}$ is at most $m\left(\alpha\right)$
and for $u\in\tilde{K}$ we have
\[
\left|Q\left(u\right)\right|\le\cthirteen\exp\left(-\cfourteen \tilde{n}\left|u-\tilde{u}\right|^{\tau}\right).
\]
\end{thm}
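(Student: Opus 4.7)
The plan is to reduce the statement to Totik's fast decreasing polynomial construction by dealing with one pole at a time via a Möbius transformation, then combining the resulting single-pole factors by multiplication. Since $Z_2^*$ is finite, constants arising at each step can be uniformly controlled.

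\textbf{Step 1 (single-pole building blocks).} For each $\alpha\in Z_2^*$ I plan to build a rational function $Q_\alpha$ whose only pole is at $\alpha$, of order at most $m(\alpha)$, with $Q_\alpha(\tilde u)=1$, $\|Q_\alpha\|_{\tilde K}\le 1$, and $|Q_\alpha(u)|\le c_1\exp(-c_2\, m(\alpha)\,|u-\tilde u|^\tau)$ on $\tilde K$, where $c_1,c_2>0$ depend on $\tilde K,\tilde u,Z_2^*,\tau$ but not on $m$. If $\alpha=\infty$, $Q_\alpha$ is a polynomial and I use Totik's construction (\cite{Totik}, Cor.~4.2) on $\tilde K$ at $\tilde u$ directly. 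For finite $\alpha$, I apply the Möbius map $\phi_\alpha(u)=1/(u-\alpha)$: the touching outer disk $D$ at $\tilde u$ can be shrunk (while keeping it touching $\tilde K$ only at $\tilde u$) so that $\alpha\notin D$; then $\phi_\alpha(D)$ is a genuine closed disk touching $\phi_\alpha(\tilde K)$ only at $\phi_\alpha(\tilde u)$, so the transformed compact set $\phi_\alpha(\tilde K)$ satisfies the touching-outer-disk condition at $\phi_\alpha(\tilde u)$. Totik's theorem then yields a polynomial $p_\alpha$ of degree $m(\alpha)$ with $p_\alpha(\phi_\alpha(\tilde u))=1$, $\|p_\alpha\|_{\phi_\alpha(\tilde K)}\le 1$, and the fast decrease estimate. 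Setting $Q_\alpha:=p_\alpha\circ\phi_\alpha$ produces a rational function of the correct pole structure.

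\textbf{Step 2 (pulling the decay back).} The identity
\[
\phi_\alpha(u)-\phi_\alpha(\tilde u)=\frac{\tilde u-u}{(u-\alpha)(\tilde u-\alpha)}
\]
shows that on the compact set $\tilde K$, which is bounded away from $\alpha$, we have $|\phi_\alpha(u)-\phi_\alpha(\tilde u)|\ge c'(\alpha)\,|u-\tilde u|$ for some $c'(\alpha)>0$. Because $Z_2^*$ is finite, I take $c':=\min_\alpha c'(\alpha)>0$; substituting into Totik's estimate for $p_\alpha$ transfers the decay to $\tilde K$ with a constant $c_2$ independent of $\alpha$.

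\textbf{Step 3 (multiplying the factors).} I set $Q:=\prod_{\alpha\in Z_2^*} Q_\alpha$. Then $Q$ is rational, its pole at each $\alpha$ has order at most $m(\alpha)$, $Q(\tilde u)=1$, and $\|Q\|_{\tilde K}\le 1$. The decay estimates multiply so that, since $\sum_\alpha m(\alpha)=\tilde n$,
\[
|Q(u)|\le c_1^{|Z_2^*|}\exp\!\bigl(-c_2\,\tilde n\,|u-\tilde u|^\tau\bigr),\qquad u\in\tilde K,
\]
which gives the theorem with $\cthirteen:=c_1^{|Z_2^*|}$ and $\cfourteen:=c_2$.

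\textbf{Expected difficulty.} I don't anticipate a real obstacle: the two things to verify are (a) that the touching-outer-disk condition is preserved under the Möbius maps (handled by shrinking the disk to miss each $\alpha$), and (b) that the constants are uniform in the data. The latter works precisely because $Z_2^*$ is finite, so finitely many distortion constants $c'(\alpha)$ and finitely many Totik-constants can be replaced by their worst values, and the product over $\alpha\in Z_2^*$ introduces only the harmless factor $c_1^{|Z_2^*|}$ that depends on the geometry but not on $m$ or $\tilde n$.
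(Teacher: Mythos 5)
Your proof follows essentially the same route as the paper's: for each pole $\alpha$ the paper likewise transforms $\tilde K$ by $u\mapsto 1/(u-\alpha)$, applies Totik's fast decreasing polynomial construction on the image compact set, pulls the decay back via the same Möbius distortion identity, and multiplies the single-pole factors together, with the constants uniform because $Z_2^*$ is finite. Your extra care about shrinking the touching disk so that its Möbius image is a genuine closed disk is a detail the paper treats as obvious, but the argument is the same.
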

\begin{proof}
Roughly speaking, we transform for each $\alpha\in Z_{2}^{*}$ the
fast decreasing polynomials to move their poles from $\infty$ to
$\alpha$ and multiply them together. 

Let $\alpha\in Z_{2}^{*}$ be fixed and let $\tilde{K}_{\alpha}:=\left\{ \frac{1}{u-\alpha}:u\in\tilde{K}\right\} $.
Obviously, $\tilde{K}_{\alpha}$ is a compact set and also satisfies
the touching-outer-disk condition at $\frac{1}{\tilde{u}-\alpha}$.
Using theorem 4.1 on page 2065 from \cite{Totik}, we 
know that there are $d_{\alpha,\tau}, D_{\alpha,\tau}>0$
depending on $\alpha$, $\tilde{K}$ and $\tau$ only
and we get a polynomial
$Q_{\alpha}$ such that $Q_{\alpha}\left(\frac{1}{\tilde{u}-\alpha}\right)=1$,
$\left\Vert Q_{\alpha}\right\Vert _{\tilde{K}_{\alpha}}=1$, $\deg\left(Q_{\alpha}\right)\le m\left(\alpha\right)$
and for $u\in\tilde{K}$
\begin{multline*}
\left|Q_{\alpha}\left(\frac{1}{u-\alpha}\right)\right|\le D_{\alpha,\tau}\exp\left(-d_{\alpha,\tau}\, m\left(\alpha\right)\,\left|\frac{1}{u-\alpha}-\frac{1}{\tilde{u}-\alpha}\right|^{\tau}\right)\\
\le D_{\alpha,\tau}\exp\left(-d_{\alpha,\tau}\, m\left(\alpha\right)\,\dist\left(\partial\tilde{K},Z_{2}^{*}\right)^{-2\tau}\left|u-\tilde{u}\right|^{\tau}\right).
\end{multline*}

Let $Q\left(u\right):=\prod_{\alpha\in Z_{2}^{*}}Q_{\alpha}\left(\frac{1}{u-\alpha}\right)$.
We immediately have $Q\left(\tilde{u}\right)=1$,
$\left\Vert Q\right\Vert _{\tilde{K}}=1$,
and $Q$ has poles at the points of $Z_{2}^{*}$ only, the order of
the pole of $Q$ at $\alpha\in Z_{2}^{*}$ is at most $m\left(\alpha\right)$.
We multiply the estimates above together, so
\begin{multline*}
\left|Q\left(u\right)\right|\le\left(\prod_{\alpha}D_{\alpha,\tau}\right)\exp\left(-\dist\left(\partial\tilde{K},Z_{2}^{*}\right)^{-2\tau}\left|u-\tilde{u}\right|^{\tau}\sum_{\alpha}d_{\alpha,\tau}\, m\left(\alpha\right)\right)\\
\le\cthirteen\exp\left(-\left(\min_{\alpha}d_{\alpha,\tau}\right)\dist\left(\partial\tilde{K},Z_{2}^{*}\right)^{-2\tau}n\left|u-\tilde{u}\right|^{\tau}\right)\\
=\cthirteen\exp\left(-\cfourteen n\left|u-\tilde{u}\right|^{\tau}\right)
\end{multline*}
where $\cthirteen=\prod_{\alpha}D_{\alpha,\tau}$ and $\cfourteen=\left(\min_{\alpha}d_{\alpha,\tau}\right)\dist\left(\partial\tilde{K},Z_{2}^{*}\right)^{-2\tau}$.
\end{proof}

\subsection{Outer touching circles
and other quantities}
\label{sec:touchcircles}

In this subsection we construct 
some auxiliary sets and the fast 
decreasing rational functions.
We also use $\arg(z)$ in the following sense: if $z\ne 0$, then $\arg(z)=z/|z|$
and $\arg(0)=0$.

Let $\Gamma_{1}=\left\{ v:\left|v\right|=1+\delta_{1}\right\} $ 
where $0<\delta_1<r_2-1$, $\delta_1<1/2$ is fixed.
It is important that $\delta_{1}$ depends on $G_{1}$ only.

Let $D_{3}:=\left\{ v:\left|v-2\right|<1\right\} $, this disk
touches the unit disk at $1$.

For every $\delta_{2,3}\in\left(0,\delta_1\right]$,
$\left\{ v:\ \left|v\right|=1+\delta_{2,3}\right\} \cap\partial D_{3}$
consists of exactly two points, $v_{1}^{*}=v_{1}^{*}\left(\delta_{2,3}\right)$
and $v_{2}^{*}=v_{2}^{*}\left(\delta_{2,3}\right)$. 
It is easy to
see that the length of the two arcs of $\left\{ v:\left|v\right|=1+\delta_{2,3}\right\} $
lying between $v_{1}^{*}$ and $v_{2}^{*}$ are different, therefore,
by reindexing them, we can assume that the shorter arc is going from
$v_{1}^{*}$ to $v_{2}^{*}$  counterclockwise. 
Elementary geometric
considerations show that for all $v$, $1\le\left|v\right|\le1+\delta_{2,3}$
with $\arg v\in\left\{ \arg v_{j}^{*}\left(\delta_{2,3}\right):\ j=1,2\right\} $,
we have (since $\delta_{2,3}<1$) 
\begin{equation}
\frac{1}{2}\sqrt{\delta_{2,3}}
\le
\left|v-1\right|
\le
2\sqrt{\delta_{2,3}}.\label{eq:w_star_dist}
\end{equation}

Let 
\[
K_{v}^{*}:=\left\{ v:\ \left|v\right|
\le
1+\delta_{1}\right\} \setminus D_{3}.
\]
Obviously, this $K_{v}^{*}$ is a compact set and satisfies the touching-outer-disk
condition at $1$ of Theorem
\ref{thm:fast_decr}.

Consider 
\[
K_{u}^{*}:=
\Phi_{2}\left[K_{v}^{*}\cap\bD^{*}\right]
\cup\Phi_{1}\left[K_{v}^{*}\cap\bD^{*}\right]\cup G_{1}.
\]
This is a compact set and also satisfies the touching-outer-disk condition
at $u_{0}=\Phi_{2}\left(1\right)$ of Theorem \ref{thm:fast_decr}.
Obviously,
$\partial G_{2}\subset K_{u}^{*}$, $G_{1}\subset K_{u}^{*}$ , $u_{0}\in K_{u}^{*}$
and if $v\in K_{v}^{*}$, then
$\Phi_{1}\left(v\right)\in K_{u}^{*}$
and $\Phi_{2}\left(v\right)\in K_{u}^{*}$ too.

Take any finite set $Z_2^*\subset Z_2$
with arbitrary multiplicity function such that
the total multiplicity is at most $N_3:=n^{3/4}$.
Now
applying Theorem \ref{thm:fast_decr}, 
there exists a fast decreasing rational function
for $K_{u}^{*}$ at $u_{0}$
and its degree is $N_4$, where $N_4 \le N_3$.
Denote the poles of $Q$ by $\zeta_1, \ldots, \zeta_{N_4}\in Z_2^{*}$ counting multiplicities.

Then $\deg(Q)=N_4\le N_3$, $Q\left(u_{0}\right)=1$, $\left|Q\left(u\right)\right|\le 1$
on if $u\in K_{u}^{*}$, moreover
\begin{equation}
\left|Q\left(u\right)\right|\le \cthirteen\exp\left(-\cfourteen N_3 |u-u_0|^\tau\right).\label{eq:Q_fast_decreases}
\end{equation}
Note that  $\cthirteen$ and $\cfourteen$ depend on $Z_2^*$ and $K_u^*$ but they are independent of
the multiplicity function and $N_3$ and $n$.

For simplicity, we put $\tau:=4/3$.

\subsection{The proof of Theorem \ref{thm:analrational}}

\subsubsection{Decomposition of the rational function}

It is easy to decompose $f$ into sum of rational functions, that
is,
\[
f=f_{1}+f_{2}
\]
where $f_{1}$ is a rational function with poles in $Z_{1}$, $f_{1}\left(\infty\right)=0$
and $f_{2}$ is a rational function with poles in $Z_2$.
This decomposition is unique. 
Put $N_1:=\deg\left(f_1\right)$,
$N_2:=\deg\left(f_2\right)$, then $N_1+N_2=n$.
Denote the poles of $f_1$ by
$\alpha_{j}$, $j=1,\ldots,N_1$
and
the poles of $f_2$ by
$\beta_{j}$, $j=1,\ldots,N_2$
 (with counting the orders of the poles).

Now fix
\begin{equation*}
\delta_{2,1}:=\frac{1}{2n}, 
\quad \delta_{2,3}:=\min\left(n^{-2/3}, 
\delta_1 \right).
\end{equation*} 


We use a Gonchar-Grigorjan type estimate
for $f_{2}$ on $G_{1}$ (see Theorem 1 in \cite{multiplygg}) so there exists $\constgg=\constgg(G_1)>0$ such that we have
\begin{equation}
\left\Vert f_{2}\right\Vert_{\Gamma}
\le 
\constgg \left(G_{1}\right)\log\left(n\right)
\left\Vert f\right\Vert_{\Gamma}.
\label{eq:f_two_norm_est}
\end{equation}
Obviously, we have 
\begin{equation}
\left\Vert f_{1}\right\Vert_{\Gamma}
\le
\left(1+\constgg\left(G_{1}\right)\log\left(n\right)\right)
\left\Vert f\right\Vert_{\Gamma}.
\label{eq:f_one_norm_est}
\end{equation}

Consider 
\[
\varphi_{1}\left(v\right):=f_{1}\left(\Phi_{1}\left(v\right)\right).
\]
We know that 
\begin{equation}
\left\Vert \varphi_{1}\right\Vert _{\partial\bD}=\left\Vert f_{1}\right\Vert _{\partial G_{2}}\label{eq:phi_one_norm}
\end{equation}
 and $\left|\varphi_{1}'\left(1\right)\right|=\left|f_{1}'\left(u_{0}\right)\right|$. 

We use the fast decreasing rational function $Q$ from Subsection \ref{sec:touchcircles}.

We decompose ``the essential part of'' $\varphi_{1}$ as follows
\begin{equation}
Q\circ\Phi_{1}\cdot\varphi_{1}=\varphi_{1r}+\varphi_{1e}\label{eq:phi_decomp}
\end{equation}
where $\varphi_{1r}$ is a rational function, $\varphi_{1r}\left(\infty\right)=0$
and $\varphi_{1e}$ is holomorphic in $\bD$. 
We use the decomposition
\begin{equation*}
(Qf)\circ \Phi_1
=
\left(Q\left(f_1+f_2\right)\right)\circ \Phi_1 
= \varphi_{1r}+\varphi_{1e}+
\left(Q f_2\right)\circ \Phi_1.
\end{equation*}

We apply the Gonchar-Grigorjan type
estimate (see Theorem 1 \cite{multiplygg})  again for $\varphi_{1e}$ on
$\bD$, this way the following sup norm estimate holds
\begin{equation}
\left\Vert \varphi_{1e}\right\Vert _{\partial\bD}
\le 
\constgg\left(\bD\right)
\log\left(n\right)\left\Vert Q\circ\Phi_{1}\cdot\varphi_{1}\right\Vert_{\partial\bD}
\le 
\constgg\left(\bD\right)
\log\left(n\right)\left\Vert \varphi_{1}\right\Vert_{\partial\bD}
\label{eq:phi_one_e_norm_est}
\end{equation}
where $C_{1}\left(\bD\right)$ is a constant independent of $\varphi_{1}$.

Furthermore, we can estimate $\varphi_{1e}\left(v\right)$ on $v\in D_{1}\setminus\bD$
as follows
\begin{equation}
\left|\varphi_{1e}\left(v\right)\right|=\left|\left(Q\cdot f_{1}\right)\circ\Phi_{1}\left(v\right)-\varphi_{1r}\left(v\right)\right|\le\left|\left(Q\cdot f_{1}\right)\circ\Phi_{1}\left(v\right)\right|+\left|\varphi_{1r}\left(v\right)\right|.\label{est:phi_one_e_v}
\end{equation}

We also need to estimate $Q$ outside $G_1$  as
follows. 

Using  Bernstein-Walsh
estimate, we can write for $v\in D_{1}\setminus\bD$
\[
\left|Q\left(\Phi_{1}\left(v\right)\right)\right|
\le
1\cdot\exp\left(
\sum_{j=1}^{N_4}  
g_{G_{2}}\left(\Phi_{1}\left(v\right),\zeta_j \right)\right)
\]
where we use that the set $\Phi_{1}\left(D_{1}\setminus\bD\right)$ is bounded,
\[
\cfour:=\sup\left\{ g_{G_{2}}\left(\Phi_{1}\left(v\right),\beta\right):\ v\in D_{1}\setminus\bD,\,\beta\in Z_{2}\right\} <\infty.
\]
Therefore, for all $v\in D_{1}\setminus\bD$, 
\[
\left|\left(Q\cdot f_{1}\right)\circ\Phi_{1}\left(v\right)\right|
\le 
e^{\cfour N_4}\left\Vert f_{1}\right\Vert _{\Gamma}.
\]

This way we can continue (\ref{est:phi_one_e_v}) and we use $u=\Phi_{1}\left(v\right)$
and that $\varphi_{1r}$ is a rational function with no poles
outside $\bD$ and the maximum principle for $\varphi_{1r}$ 
\[
\le e^{\cfour N_4}\left|f_{1}\left(u\right)\right|+\left\Vert \varphi_{1r}\right\Vert _{\partial\bD}
\le 
e^{\cfour N_4}\left\Vert f_{1}\right\Vert _{\Gamma}+\left\Vert \varphi_{1}\right\Vert _{\partial\bD}+\left\Vert \varphi_{1e}\right\Vert _{\partial\bD}
\]
and here we used that $f_{1}$ has no pole in $G_{2}$ and the maximum
principle. We can estimate these three sup norms with the help of
(\ref{eq:f_one_norm_est}) and (\ref{eq:phi_one_norm}), (\ref{eq:f_one_norm_est})
and (\ref{eq:phi_one_e_norm_est}), (\ref{eq:phi_one_norm}), (\ref{eq:f_one_norm_est}).
Hence we have for $v\in D_{1}\setminus\bD$ 
\begin{multline}
\left|\varphi_{1e}\left(v\right)\right|
\le
\left(e^{\cfour N_4 }+1+
\constgg\left(\bD\right)
\log\left(n\right)\right)\left(1+
\constgg\left(G_{1}\right)
\log\left(n\right)\right)\left\Vert f\right\Vert _{\Gamma}
\\
=O\left(\log\left(n\right)e^{\cfour N_4}\right)\left\Vert f\right\Vert_{\Gamma}.
\label{eq:phi_one_e_v_est}
\end{multline}

\subsubsection{Approximating  the interior error function}

In this subsection we
construct an approximation to $\varphi_{1e}$ which is holomorphic on a larger set containing $\bD$.
Later we will use properties \eqref{eq:phi_one_e_sup_err_est} and \eqref{eq:phi_one_e_deriv_at_one} only.

The approximation is done by interpolating $\varphi_{1e}$ as follows. $\varphi_{1e}$
is holomorphic in $D_{1}=\left\{ v\in\bC:\,|v|<1+\delta_{1}\right\} $.

Put
\[
N_5:=N_2+N_4+[n^{4/5}],
\]
where $N_2$ is $\deg\left(f_2\right)$ and $n$ is $\deg(f)$.

For simplicity, put 
\[
\alpha_j':=\Phi_1^{-1}\left(\alpha_j\right)
\mbox{ and }
\beta_k':=\Phi_2^{-1}\left(\beta_k\right)
\]
where $j=1,\ldots,N_1$ and $k=1,\ldots,N_2$.
Introduce $q$ for the interpolation as follows
\begin{equation}
q(v):=\prod_{j=1}^{N_2} \frac{1-\overline{\beta_j'} \; v}{v-\beta_j'}
\; \cdot \; \prod_{j=1}^{N_5-N_2-2} 
\frac{1-\overline{\gamma}_j v}{v-\gamma_j}
\;\cdot(v-1)^2
\label{interpolqdef}
\end{equation}
where
$\gamma_j$, $j=1,\ldots,N_5-N_2-2$
are from $\Phi_2^{-1}(Z_2)$,
the first $N_4$ coincide with the $\zeta_j$'s (i.e. $\zeta_j=\gamma_j'=\Phi_2(\gamma_j)$ for $j=1,\ldots,N_4$)
and are arbitrary anyway.

Modulus of $q$ can be written as follows
(when $|\xi|\ge 1$)
\begin{equation}
\left|q(\xi)\right|
= 
\exp\left(2\log|\xi-1|
+
\sum_{j=1}^{N_2} \log\left| B\left( \beta_j',\xi\right)\right|
+
\sum_{k=1}^{N_5-N_2-2} \log\left| B\left(\gamma_k, \xi\right)\right|\right)
\label{eq:mod_of_q}
\end{equation}
where $B(a,v)=\frac{1-\overline{a}v}{v-a}$.

Note that if $|\xi|=1$, then $|q(\xi)|\le 4$.

On $D_1\setminus \bD$, $|q(.)|$ can be estimated from below as follows.
We know that $D_1$ and $\Phi_2^{-1}(Z_2)$ are disjoint and they are of positive distance from one another.
Therefore there exists $\cfive>0$ such that for all $\beta\in \Phi_2^{-1}(Z_2)$ and $v\in D_1\setminus \bD$, 
the modulus of the derivative of the Blaschke factor at $v$ with pole at 
$\beta$ is at least $\cfive$: $\left|B'(\beta,v)\right|\ge \cfive$.
This implies that for all $\beta\in \Phi_2^{-1}(Z_2)$ and $v\in D_1\setminus \bD$
\begin{equation}
\exp g_{\bD^*}\left(v,\beta\right)
\ge 1+\cfive \left(|v|-1\right).
\label{est:expGreen_lower}
\end{equation}
Moreover there is a similar upper estimate. That is, there exists $\cnineteen>0$ such
that for all $\beta\in \Phi_2^{-1}(Z_2)$ and $v\in D_1\setminus \bD$
\begin{equation}
\exp g_{\bD^*}\left(v,\beta\right)
\leq 1+\cnineteen \left(|v|-1\right).
\label{est:expGreen_upper}
\end{equation}

Multiplying \eqref{est:expGreen_lower} for all Blaschke factors (appearing in \eqref{interpolqdef}), we obtain
for $|v|=1+\delta_1$,
\begin{equation}
\left|q(v)\right|  \ge
\left(1+\cfive \delta_1 \right)^{N_5 -2} \delta_1^2.
\label{est:qlower_rough}
\end{equation}
We also have a sharper lower estimate:
\begin{equation}
\left|q(v)\right|  
\ge
\left(1+\cfive \delta_1 \right)^{N_5-N_2 -2}  
\;\delta_1^2\;
\prod_{j=1}^{N_2} 
\left|B\left(\beta_j',v\right)\right|.
\label{est:qlower_fine}
\end{equation}

We define the approximating rational function (see e.g. 
\cite{Walsh} chapter VIII)
\[
r_{1,N_5}\left(v\right):=\frac{1}{2\pi i}
\int_{\Gamma_{1}}\frac{\varphi_{1e}\left(\xi\right)}{q\left(\xi\right)}\frac{q\left(v\right)-q\left(\xi\right)}{v-\xi}d\xi.
\]
It is well known that $r_{1,N_5}$ does not depend on $\Gamma_{1}$.
Since $1$ is a double zero of $q$, therefore $r_{1,N_5}$ and $r_{1,N_5}'$
coincide there with $\varphi_{1e}$ and $\varphi_{1e}'$ respectively.

The error of the approximating rational function $r_{1,N_5}$  can be written as 
\begin{equation}
\varphi_{1e}\left(v\right)-r_{1,N_5}\left(v\right) 
=  \frac{1}{2\pi i}\int_{\Gamma_{1}}\frac{\varphi_{1e}\left(\xi\right)}{\xi-v}\frac{q\left(v\right)}{q\left(\xi\right)}d\xi,
\label{eq:p_N_one_error_est}
\end{equation}
where $\Gamma_{1}=\left\{ v\in\bC:\,|v|=1+\delta_{1}\right\} $ and
recall that $\varphi_{1e}$ is holomorphic in $\left\{ |v|<1+\delta_{1}\right\} $
and is continuous on $\left\{ |v|\leq1+\delta_{1}\right\} $.

For
$v\in\bD$ the error can be estimated as follows
\begin{multline*}
\left|\varphi_{1e}\left(v\right)-r_{1,N_5}\left(v\right)\right|
=
\frac{1}{2\pi}\int_{\Gamma_{1}}\frac{1}{\left|\xi-v\right|}\left|d\xi\right|\cdot\left\Vert \varphi_{1e}\right\Vert _{D_{1}}4
\frac{1}{\left(1+\cfive\delta_{1}\right)^{N_5-2}}
\frac{1}{\delta_{1}^{2}}
\\ \leq
4\frac{1+\delta_{1}}{\delta_{1}^{3}}
\left\Vert \varphi_{1e}\right\Vert _{D_{1}}
\frac{1}{\left(1+\cfive\delta_{1}\right)^{N_5-2}}
=4\frac{1+\delta_{1}}{\delta_{1}^{3}}
\frac{O\left(\log(n)e^{\cfour N_4}\right)}
{\left(1+\cfive\delta_{1}\right)^{N_5 -2}}
\left\Vert f\right\Vert _{\Gamma}
\end{multline*}
which tends to $0$ as $n\rightarrow\infty$, because $N_5/N_4\rightarrow\infty$ and $\delta_1$ is fixed, that is
\[
\frac{e^{\cfour N_4}}
{\left(1+\cfive\delta_{1}\right)^{N_5-2 }}
=
\exp\left(\cfour N_4-\log\left(1+\cfive\delta_{1}\right) N_5\left(1+o\left(1\right)\right)\right)\rightarrow 0.
\]

So, $r_{1,N_5}$ is a rational function with poles in $\Phi_{2}^{-1}\left(Z_{2}\right)$
only and we know that 
\begin{equation}
\left\Vert \varphi_{1e}-r_{1,N_5}\right\Vert _{\partial\bD}
=o\left(1\right)\left\Vert f\right\Vert _{\Gamma}
\label{eq:phi_one_e_sup_err_est}
\end{equation}
where $o\left(1\right)$ is independent of $f$ and depends only on
$\Gamma$ and tends to $0$ as $n\rightarrow\infty$, furthermore
\begin{equation}
\varphi_{1e}'\left(1\right)
=r_{1,N_5}'\left(1\right).
\label{eq:phi_one_e_deriv_at_one}
\end{equation}

\subsubsection{Approximating the term with poles outside}
Now we interpolate and approximate $\left( Q \cdot f_{2}\right)\circ\Phi_{1}$. 

We have the following description of the growth of Green's function. 
\begin{lem}
\label{lem:green_est}
There exists $\csix>0$ depending on $G_{2}$ only and 
is independent of $n$ and $f$ such that for all $1\le\left|v\right|\le1+\delta_{1}$
and $ \beta\in Z_2$ 
we have
\begin{equation}
g_{G_{2}}\left(\Phi_{1}\left(v\right),\beta\right)
\le
\csix\left(\left|v\right|-1\right).\label{eq:green_est_away}
\end{equation}
\end{lem}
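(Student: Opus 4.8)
The plan is to prove Lemma \ref{lem:green_est} by transferring the problem from $G_2$ back to the exterior disk $\bD^*$ via the conformal map $\Phi_2$, where Green's functions are explicit, and then controlling the composition $\Phi_2^{-1}\circ\Phi_1$ near $u_0$. First I would write $g_{G_2}(\Phi_1(v),\beta) = g_{\bD^*}(\Phi_2^{-1}(\Phi_1(v)),\beta')$ with $\beta'=\Phi_2^{-1}(\beta)$, using conformal invariance of Green's functions (valid since $\Phi_1(v)\in G_2$ for $v\in D_1\setminus\bD$, as $\Phi_1$ extends conformally past $\partial\bD$ into $G_1^+$ which overlaps $G_2$). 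On $\bD^*$ one has $g_{\bD^*}(w,\beta') = \log\bigl|\frac{1-\overline{\beta'}w}{w-\beta'}\bigr|$ when $\beta'\ne\infty$ (and $=\log|w|$ when $\beta'=\infty$), so the task reduces to bounding $g_{\bD^*}(w,\beta')$ for $w=\Phi_2^{-1}(\Phi_1(v))$ in terms of $|v|-1$.

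The key geometric observation is that $\psi:=\Phi_2^{-1}\circ\Phi_1$ is well-defined and holomorphic on a neighborhood of the arc of $\partial\bD$ near $1$ (intersected with $D_1\setminus\bD$), it maps $\partial\bD$ to $\partial\bD$ near $1$, fixes $1$, and has $|\psi'(1)|=1$ by the normalizations $|\Phi_j'(1)|=1$ in Proposition \ref{prop:green_deriv_g_one_g_two}. Hence $\psi$ maps points just outside $\bD$ to points just outside $\bD$, with $\dist(\psi(v),\partial\bD)$ comparable to $\dist(v,\partial\bD)$, i.e. there is a constant $\cnine$ (depending on $G_2$ only) with $|\psi(v)|-1 \le \cnine(|v|-1)$ for $v\in D_1\setminus\bD$ close enough to $1$; for $v$ bounded away from $1$ on the compact annulus $\{1\le|v|\le 1+\delta_1\}$ the quantity $g_{G_2}(\Phi_1(v),\beta)$ is bounded above (uniformly in $\beta\in Z_2$ by the finite supremum $\cfour$-type argument already used in the paper) while $|v|-1$ is bounded below, so the inequality holds there trivially after enlarging the constant. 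Then I would combine this with the elementary estimate — essentially \eqref{est:expGreen_upper}, or a direct Taylor expansion of $\log\bigl|\frac{1-\overline{a}w}{w-a}\bigr|$ in $|w|-1$ — giving $g_{\bD^*}(w,\beta')\le C(|w|-1)$ for $w$ near $\partial\bD$ and $\beta'$ ranging over the compact set $\Phi_2^{-1}(Z_2)$ (which is of positive distance from $\partial\bD$). Chaining the two comparisons yields \eqref{eq:green_est_away} with $\csix$ depending only on $G_2$.

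The main obstacle is the uniformity: one must ensure $\csix$ is independent of $\beta\in Z_2$ and of $v$ throughout the whole annulus $1\le|v|\le 1+\delta_1$, not merely in an unspecified small neighborhood of $1$. This is handled by the compactness of $Z_2$ (hence of $\Phi_2^{-1}(Z_2)$) and its positive distance from $\partial\bD$, which makes $\sup_{\beta'}\sup_{|w|\le 1+\delta_1'}\frac{g_{\bD^*}(w,\beta')}{|w|-1}$ finite, together with the fact that $\psi$ is a fixed conformal map with $\psi(1)=1$ so that the Koebe-type distortion bounds for $\psi$ on the compact set $\overline{D_1\setminus\bD}$ (staying inside its domain of conformality) are uniform. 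A minor point to check is that $\Phi_1(v)$ genuinely lies in $G_2$ for all $v\in D_1\setminus\bD$ so that $g_{G_2}(\Phi_1(v),\beta)$ is the relevant positive harmonic quantity and the conformal-invariance identity applies; this follows because $\Phi_1$ maps $D_1\setminus\overline{\bD}$ into $G_1^+\setminus\overline{G_1}\subset G_2$.
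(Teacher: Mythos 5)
Your main chain of estimates is essentially the paper's own argument in different packaging: the paper also writes $g_{G_2}(\Phi_1(v),\beta)=\log\left|B\left(\beta',\Phi_2^{-1}\circ\Phi_1(v)\right)\right|$, bounds the derivative of $\Phi_2^{-1}\circ\Phi_1$ and of the Blaschke factor uniformly (for $1\le|v|\le1+\delta_1$ and $\beta\in Z_2$), and integrates along the radial segment from $v/|v|$ to $v$, which is exactly your two-step comparison ``$|\psi(v)|-1\lesssim |v|-1$'' followed by ``$g_{\bD^*}(w,\beta')\lesssim |w|-1$'' collapsed into one chain-rule estimate.

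However, as written your case split contains a step that fails. You prove $|\psi(v)|-1\le \cnine(|v|-1)$ only for $v$ ``close enough to $1$'', and you dispose of the remaining $v$ by saying that there $g_{G_2}(\Phi_1(v),\beta)$ is bounded above (true, a $\cfour$-type bound) \emph{while $|v|-1$ is bounded below}. The latter is false: the annulus $1\le|v|\le1+\delta_1$ contains points with $|v|=1+\varepsilon$, $\varepsilon$ arbitrarily small, whose argument is far from $0$; for such $v$ the right-hand side of \eqref{eq:green_est_away} is $\csix\varepsilon$, while $g_{G_2}(\Phi_1(v),\beta)$ is positive of order $\varepsilon$, so a constant upper bound on the Green function gives nothing and the ``enlarge the constant'' argument does not close this region. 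The gap is easily repaired inside your own framework, and the repair is precisely the paper's route: the restriction to a neighborhood of $1$ is unnecessary, because $\psi=\Phi_2^{-1}\circ\Phi_1$ is holomorphic on (a neighborhood of) the whole closed annulus, maps all of $\partial\bD$ into $\partial\bD$, and has $|\psi'|$ bounded there; hence for every $v$ in the annulus $|\psi(v)|-1\le\left|\psi(v)-\psi(v/|v|)\right|\le \sup|\psi'|\,(|v|-1)$, and chaining with your uniform bound $g_{\bD^*}(w,\beta')\le C(|w|-1)$ (valid since $\Phi_2^{-1}(Z_2)$ is compact in $\bD^*$ and away from the circle) yields \eqref{eq:green_est_away} on the full range $1\le|v|\le1+\delta_1$ with a constant depending only on $G_2$. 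With that modification (and the implicit assumption, shared with the paper, that $\delta_1$ is small enough for $\Phi_1(\{1\le|v|\le1+\delta_1\})$ to lie in the domain of $\Phi_2^{-1}$), your proof is correct.
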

\begin{proof}

We can express Green's function in the following way
for $u\in G_{2}$,
\[
g_{G_{2}}\left(u,\beta\right)
=
\log\left|B\left(\beta, \Phi_2^{-1}(u)
\right)\right|
\]
where $B\left(\beta, v \right)=\frac{1-\overline{\beta}\,v}{v-\beta}$ is Blaschke factor.
Hence
\[
g_{G_{2}}\left(\Phi_1(v),
\beta\right)
=
\log\left|B\left(\beta, \Phi_2^{-1}\circ \Phi_1(v)
\right)\right|.
\]
Here we use that 
there exists $\csixteen$ such that
for all $1\le |v|  \le 1+\delta_1$,
\[
\left|\left( \Phi_2^{-1}\circ \Phi_1(.)\right)' (v)\right|
\le \csixteen
\]
and there exists $\cseventeen$ such that
for all $1\le |v| \le 1+\delta_1$,
and $\beta\in Z_2$,
\[
\left|B\left( \beta, v\right)' \right|
\le \cseventeen.
\]
Finally, the directional derivative
of $g_{\bD^*}$ at $v$ from direction $v_1=v/|v|$ can be estimated as
\begin{equation*}
\frac{\partial}{\partial v_1 } g_{\bD^*}(v,\beta) 
\le
\left|B\left( \beta, .\right)' \vert_{\Phi_2^{-1}\circ \Phi_1(v)} \right|
\left|\left( \Phi_2^{-1}\circ \Phi_1(.)\right)' (v)\right|
\le \csixteen \cseventeen
\end{equation*}
and integrating it along the radial
ray $\left[v_1,v\right]$,
we obtain \eqref{eq:green_est_away}.

\end{proof}

Now we give the approximating rational function as follows
\[
r_{2,N_5}\left(v\right)
:= \frac{1}{2\pi i}\int_{\Gamma_2}\frac{\left(Q\cdot f_{2}\right)\circ\Phi_{1}\left(\xi\right)}{q\left(\xi\right)}\frac{q\left(v\right)-q\left(\xi\right)}{v-\xi}d\xi
\]
where $\Gamma_2$ can be arbitrary as long as $\bD\subset\inter\Gamma_2$ and
$\Gamma_2\subset D_{1}$, and $q$ is defined above. 
We remark that
we use the same interpolating points, but we need a different $\Gamma_2$
for the error estimate.

Now we construct $\Gamma_{2}$ for the estimate and investigate
the error. We use 
$\delta_{2,1}=1/(2n)$, 
 $\delta_{2,3}=\min\left(n^{-2/3}, 
 \delta_1 \right)$. 
We give four Jordan arcs 
that will make up $\Gamma_{2}$. Let $\Gamma_{2,3}$ be the (shorter,
circular) arc between $v_{1}^{*}\left(\delta_{2,3}\right)$ and $v_{2}^{*}\left(\delta_{2,3}\right)$,
$\Gamma_{2,1}$ be the longer circular arc between $v_{1}^{*}\left(\delta_{2,3}\right)\frac{1+\delta_{2,1}}{1+\delta_{2,3}}$
and $v_{2}^{*}\left(\delta_{2,3}\right)\frac{1+\delta_{2,1}}{1+\delta_{2,3}}$,
\begin{eqnarray*}
\Gamma_{2,2}: & = & \left\{ v:\ 1+\delta_{2,1}\le\left|v\right|\le1+\delta_{2,3},\ \arg v=\arg\left(v_{1}^{*}\left(\delta_{2,3}\right)\right)\right\} 
\end{eqnarray*}
 and similarly 
\begin{eqnarray*}
\Gamma_{2,4}: & = & \left\{ v:\ 1+\delta_{2,1}\le\left|v\right|\le1+\delta_{2,3},\ \arg v=\arg\left(v_{2}^{*}\left(\delta_{2,3}\right)\right)\right\} 
\end{eqnarray*}
 be the two segments connecting $\Gamma_{2,1}$ and $\Gamma_{2,3}$.
Finally let $\Gamma_{2}$ be the union of $\Gamma_{2,1}$, $\Gamma_{2,2}$,
$\Gamma_{2,3}$ and $\Gamma_{2,4}$. The figure 
\ref{fig:gamma2}
depicts these arcs and $K_{v}^{*}$ defined above. 
\begin{figure}
\begin{center}
\includegraphics[keepaspectratio,width=0.6\textwidth]{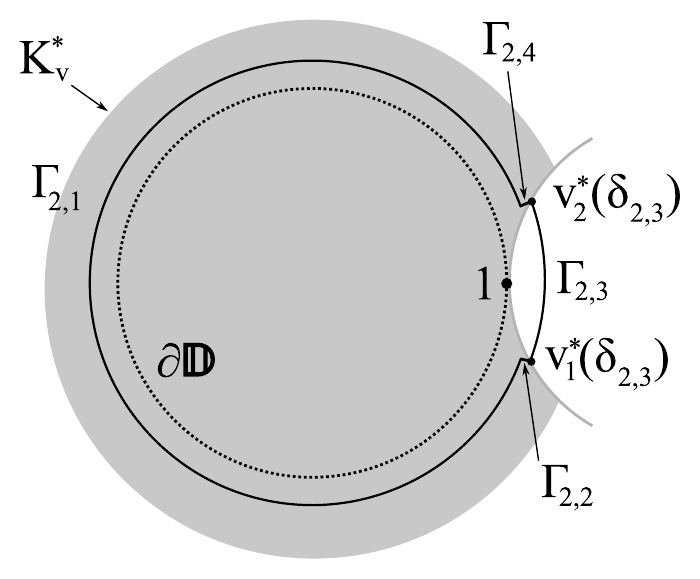}
\end{center}
\caption{The integration curve $\Gamma_2$}
\label{fig:gamma2}
\end{figure}

We estimate the error of $r_{2,N_5}$
to $\left(Q\cdot f_{2}\right)\circ\Phi_{1}$
on each integral separately:
\begin{multline*}
\left(Q\cdot f_{2}\right)\circ\Phi_{1}\left(v\right)
-r_{2,N_5}\left(v\right)
=\frac{1}{2\pi i}\int_{\Gamma_{2}}\frac{\left(Q\cdot f_{2}\right)\circ\Phi_{1}\left(\xi\right)}{\xi-v}\frac{q\left(v\right)}{q\left(\xi\right)}d\omega\\
=\frac{1}{2\pi i}\left(\int_{\Gamma_{2,1}}+\int_{\Gamma_{2,2}}+\int_{\Gamma_{2,3}}+\int_{\Gamma_{2,4}}\right).
\end{multline*}
For the first term, we use  Bernstein-Walsh estimate 
for the rational function $f_{2}$ on $G_{2}$ and the fast decreasing rational function
$Q$ as follows. 
If $v\in\Gamma_{2,1}$, then with \eqref{eq:green_est_away},
$g_{G_{2}}\left(\Phi_{1}\left(v\right),\beta\right)
\le
\csix\delta_{2,1}
=\csix/(2n)$
(uniformly in $\beta\in Z_2$),
therefore
\begin{multline*}
\left|f_{2}\left(\Phi_{1}\left(v\right)\right)\right|
\le
\left\Vert f_{2}\right\Vert _{\Gamma}\exp\left(n\frac{\csix}{2n}\right)
\le
\left\Vert f\right\Vert _{\Gamma}\constgg\left(G_{1}\right)\log\left(n\right)e^{\csix/2}
\\= 
O\left(\log\left(n\right)\right)\left\Vert f\right\Vert _{\Gamma}
\end{multline*}
where we used (\ref{eq:f_two_norm_est}). Now we use the fast decreasing
property of $Q$ as follows. 

We use  
\[
\cnine := \inf\left\{ \left|\Phi_{1}'\left(v\right)\right|:\ v\in D_{1}\right\} >0.
\]
Hence, if $v\in\Gamma_{2,1}\cup \Gamma_{2,2} \cup \Gamma_{2,4}$, then by \eqref{eq:w_star_dist},
$|v-1|\ge 1/2\, \sqrt{\delta_{2,3}}$,
and then
\begin{equation}
|u-u_0|\ge 
\frac{\cnine}{2} \sqrt{\delta_{2,3}}
\label{est:u_dist_for_fastd}
\end{equation}
where $u=\Phi_1(v)$ and $u_0=\Phi_1(1)$
and therefore
the fast decreasing rational function
$Q$ is small, see \eqref{eq:Q_fast_decreases}, in other words,
\begin{equation}
\left| Q\left(\Phi_1(v) \right)\right|
\leq
\cthirteen \exp\left(
-\cfourteen N_3 \left|\cnine/2\, \sqrt{\delta_{2,3}} \right|^\tau\right).
\label{est:Q_on_gamma124}
\end{equation}

Therefore  we can write
\begin{multline*}
\left|\left(Q\cdot f_{2}\right)\left(\Phi_{1}\left(v\right)\right)\right|
\le 
O\left(\log\left(n\right)\right)
\left\Vert f\right\Vert _{\partial G_{2}}
\cthirteen \exp\left(
-\cfourteen N_3 \left|\cnine/2\, \sqrt{\delta_{2,3}} \right|^\tau\right)
\\ \le 
O\left(\frac{\log\left(n\right)}{\exp\left(\cfourteen (\cnine/2)^\tau 
\; N_3 n^{-\tau/3}\right)}\right)
\left\Vert f\right\Vert _{\Gamma}
=: E_{2,1} 
\left\Vert f\right\Vert _{\Gamma}
\end{multline*}
where the coefficient tends to $0$ 
because, with the definition of $\delta_{2,3}$,
we can write $N_3 \delta_{2,3}^{\tau/2} = n^{3/4} \min\left(n^{-\tau/3}, 
\delta_1^{\tau/2}\right)$
and $n^{3/4} n^{-\tau/3}\rightarrow\infty$ since $\tau=4/3$. 

Integrating along $\Gamma_{2,1}$, we can write for $v\in\bD$
\begin{multline*}
\left|\frac{1}{2\pi i}
\int_{\Gamma_{2,1}}\frac{\left(Q\cdot f_{2}\right)\circ\Phi_{1}\left(\xi\right)}{\xi-v}\frac{q\left(v\right)}{q\left(\xi\right)}d\xi\right|
\\ \le
\frac{1}{2\pi}
\int_{\Gamma_{2,1}}\frac{1}{\left|\xi-v\right|}
E_{2,1}
\left\Vert f\right\Vert_{\Gamma}4\frac{1}{\left(1+\cfive\delta_{2,1}\right)^{N_{2}}\delta_{2,1}^{2}}\left|d\omega\right|
\\ \le
\frac{2}{\pi}\frac{2\pi\left(1+\delta_{2,1}\right)}{\left(1+\cfive\delta_{2,1}\right)^{N_2}\delta_{2,1}^{3}}
E_{2,1}
\left\Vert f\right\Vert_{\Gamma}
=
O\left(n^3\right) E_{2,1}
\left\Vert f\right\Vert _{\Gamma}
\end{multline*}
here we used $\delta_{2,1}=1/(2n)$
and $n^3 E_{2,1}\rightarrow 0$.

We estimate the third term, the integral on $\Gamma_{2,3}$, as follows
for $v\in\bD$
\begin{multline}
\left|\frac{1}{2\pi i}
\int_{\Gamma_{2,3}}\frac{\left(Q\cdot f_{2}\right)\circ\Phi_{1}\left(\xi\right)}{\xi-v}\frac{q\left(v\right)}{q\left(\xi\right)}d\omega\right|
\\ \le
\frac{1}{2\pi}
\int_{\Gamma_{2,3}}4\frac{1}{\left|\xi-v\right|}\left|\left(Q\cdot f_{2}\right)\left(\Phi_{1}\left(\xi\right)\right)\right|\frac{1}{\left|q\left(\xi\right)\right|}\left|d\xi\right|.\label{eq:third_term_main_est}
\end{multline}
Here, $\left|\xi\right|=1+\delta_{2,3}$
and $\left|v-\xi\right|\ge\delta_{2,3}$.
Roughly speaking, $f_{2}$ grows and this time $Q$ grows too  and only $\left|q\left(\xi\right)\right|^{-1}$ decreases. 
We are going to estimate the growths
with the help of Bernstein-Walsh estimate and Blaschke factors.

For simplicity, put $F(\xi):=\Phi_2^{-1}\circ \Phi_1 \left(\xi\right)$, hence
$F(1)=1$, $F'(1)=1$.
This latter holds because $\Phi_1(1)=\Phi_2(1)=u_0$.
Moreover, $\Phi_1(\partial \bD) =\Phi_2(\partial \bD)=\Gamma$,
hence $\mathrm{arg} \Phi_1'(1)=\mathrm{arg} \Phi_2'(1)$ and together with
  $\left|\Phi_1'(1)\right|=\left|\Phi_2'(1)\right|=1$,
it implies that $\Phi_1'(1)=\Phi_2'(1)$.
Therefore $\left(\Phi_2^{-1}\circ \Phi_1\right)'(1)=1$.
Moreover, $\exp g_{G_2}\left(\Phi_1(\xi), \Phi_2(\gamma)\right)=B(\gamma,F(\xi))$ (if $|\gamma|> 1$).

First,
Bernstein-Walsh
estimate  for $f_{2}$ on $G_{2}$ yields for $\xi\in D_1\setminus \bD$
(and in particular, if $\xi\in\Gamma_{2,3}$) that
\begin{multline}
\left|f_{2}\left(\Phi_{1}\left(\xi\right)\right)\right|
\le
\left\Vert f_{2}\right\Vert _{\Gamma}
\exp\left(
\sum_{j=1}^{N_{2}}g_{G_{2}}\left(\Phi_{1}\left(\xi\right),\beta_j \right)\right)
\\ = 
\left\Vert f_{2}\right\Vert_{\Gamma}
\prod_{j=1}^{N_2}
\left|B\left( \beta_j', F \left(\xi\right) \right)  \right|. 
\label{est:f2_on_gamma23}
\end{multline}

As for $q$, we use \eqref{eq:w_star_dist}
and \eqref{eq:mod_of_q}, hence
 for $\xi \in D_1\setminus \bD$
\begin{multline}
\frac{1}{\left|q\left(\xi\right)\right|}
\le
\frac{1}{|\xi-1|^{2}}
\exp\left(
- \sum_{k=1}^{N_5-N_2-2} \log\left| B\left(\gamma_k, \xi\right)\right|
-\sum_{j=1}^{N_{2}}
\log\left|B\left(\beta_j',\xi \right)\right|\right)
\\=
\frac{1}{|\xi-1|^{2}}
\prod_{k=1}^{N_5-N_2 -2}
\left|B\left( \gamma_k, \xi \right)  \right| ^{-1}
\prod_{j=1}^{N_2}
\left|B\left( \beta_j', \xi \right)  \right| ^{-1}.
\label{est:q_on_gamma23_pointw}
\end{multline}

As for $Q$, we use  Bernstein-Walsh
estimate for $Q$ on $G_{1}\cup\partial G_{1}$
and that $G_{1}\cup\partial G_{1}\subset K_{u}^{*}$. Therefore, $\left\Vert Q\right\Vert _{\Gamma}=1$
and 
\begin{equation}
\left|Q\left(\Phi_{1}\left(\xi\right)\right)\right|
\le
\left\Vert Q\right\Vert_{\Gamma}
\exp\left(\sum_{j=1}^{N_4} g_{G_{2}}\left(\Phi_{1}\left(\xi\right),\zeta_j\right)\right)
 \le
\prod_{j=1}^{N_4}
\left|B\left(\zeta_j, F(\xi)\right)\right|
.
\label{est:fastq_on_gamma23}
\end{equation}

Now we are going to multiply together these estimates.
Consider the quotients
\begin{equation*}
H(\beta',\xi)=H(\xi) :=
\frac{B\left( \beta', F \left(\xi\right) \right)  }{B\left( \beta', \xi \right)}
\end{equation*}
for $\beta'\in \bD^*$.

First, $\log(H(\xi))'$ is continuous and holomorphic (near $1$, using the main branch of the logarithm since $H(1)=1$), $\log(H)'(1)=0$ because $F'(1)=0$. 
So there exists $\clogh>0$ such that $|\log(H(\xi))'|\leq \clogh |\xi-1|$ and this is uniformly true for all $\beta'\in \Phi_2^{-1}(Z_2)$ (since $\Phi_2^{-1}(Z_2)$ is compact in $\bD^*$). 
Now, taking the real part and 
integrating along radial rays (see also the proof of Lemma 9 in \cite{JMAA}), 
we get
\begin{equation}
\log\left|H(\xi)\right|\leq 
\clogh \left(|\xi|-1\right)
\left|\xi-1\right|.
\label{eq:loghest}
\end{equation}

Applying this estimate for the product of \eqref{est:f2_on_gamma23},
\eqref{est:q_on_gamma23_pointw},
and \eqref{est:fastq_on_gamma23}
we can write

\begin{multline*}
\left|f_{2}\left(\Phi_{1}\left(\xi\right)\right)\right|
\frac{1}{\left|q\left(\xi\right)\right|}
\left|Q\left(\Phi_{1}\left(\xi\right)\right)\right|
\le 
\left\Vert f_{2}\right\Vert_{\Gamma}
\prod_{j=1}^{N_2} \left| B\left( \beta_j',F(\xi)\right)\right|
\\ \cdot\  
\frac{1}{|\xi-1|^{2}}
\prod_{k=1}^{N_5-N_2 -2}
\left|B\left( \gamma_k, \xi \right)  \right| ^{-1}
\prod_{j=1}^{N_2}
\left|B\left( \beta_j', \xi \right)  \right|^{-1}
 \ \cdot \  
\prod_{j=1}^{N_4}
\left|B\left(\zeta_j, F(\xi)\right)\right|
\\ \le 
\left\Vert f_{2}\right\Vert_{\Gamma}
\frac{1}{\delta_{2,3}^{2}}
\frac{1}{\left(1+\cfive\delta_{2,3}\right)^{
N_5-N_2-2 -N_4 
}}
\\ \cdot  \exp\left(\sum_{j=1}^{N_2}\log \left|  H\left( \beta_j', \xi\right)\right|
+\sum_{j=1}^{N_4}\log\left|  H\left( \zeta_j, \xi\right) \right|\right)
\\ \le 
\left\Vert f_{2}\right\Vert_{\Gamma}  
n^{4/3} \cdot 
 \left(1+\cfive n^{-2/3}\right)^{- n^{4/5}}
\cdot \exp\left(\clogh (N_2+N_4)\delta_{2,3}\, 2\sqrt{\delta_{2,3}}\right)
\\ \le 
\left\Vert f_{2}\right\Vert_{\Gamma}
n^{4/3}e^{-\cfive/2\, n^{2/15}}
\exp\left(O\left( n\cdot \frac{1}{n^{2/3}}\frac{1}{n^{1/3}}\right) \right)
\\ =
\left\Vert f_{2}\right\Vert_{\Gamma} 
 n^{4/3} e^{-\cfive/2\, n^{2/15}} O(1)
 =: \left\Vert f_{2}\right\Vert_{\Gamma} 
E_{2,3}
\end{multline*}
where we used $N_5-N_2-2=N_4+[ n^{4/5}] -2$ and \eqref{eq:loghest} at the second step, the definition of $\delta_{2,3}$ and \eqref{eq:w_star_dist} at the third step, 
again the the definition of $\delta_{2,3}$ and that $n$ is large (hence $\left(1+\frac{\cfive}{n^{2/3}}\right)^{n^{2/3}} \ge  e^{\cfive/2}$) at the fourth step.

Therefore, the integral over $\Gamma_{2,3}$
can be written as (see \eqref{eq:third_term_main_est})
\begin{equation}
\left|\frac{1}{2\pi i}
\int_{\Gamma_{2,3}}\frac{\left(Q\cdot f_{2}\right)\circ\Phi_{1}\left(\xi\right)}{\xi-v}\frac{q\left(v\right)}{q\left(\xi\right)}d\omega\right| 
\leq 
\left\Vert f_{2}\right\Vert _{\Gamma}
E_{2,3} \frac{8}{ \delta_{2,3}}
\end{equation}
where we used that 
$|q(v)|\le 4$ (if $|v|=1$), and 
the length of $\Gamma_{2,3}$ is at most $4\pi$.

\medskip

For $\Gamma_{2,2}$ and $\Gamma_{2,4}$, we apply the same estimate
which we detail for $\Gamma_{2,2}$ only. We again start with the
integral for $v\in\bD$
\begin{multline}
\left|\frac{1}{2\pi i}
\int_{\Gamma_{2,2}}
\frac{\left(Q\cdot f_{2}\right)\circ\Phi_{1}\left(\xi\right)}
{v-\xi}
\frac{q\left(v\right)}
{q\left(\xi\right)}d\xi\right|
\\ \le
\frac{1}{2\pi}
\int_{\Gamma_{2,2}}
4\frac{1}{\left|v-\xi\right|}
\left|\left(Q\cdot f_{2}\right)\left(\Phi_{1}\left(\xi\right)\right)\right|
\frac{1}{\left|q\left(\xi\right)\right|}\left|d\xi\right|.\label{eq:second_term_main_est}
\end{multline}
Since $\xi\in\Gamma_{2,2}$, we can rewrite it in the form $\xi=\left(1+\delta\right)v_{1}^{*}$
where $\delta_{2,1}\le\delta\le\delta_{2,3}$ (with 
$v_{1}^{*}=v_{1}^{*}\left(\delta_{2,3}\right)$). 
We use 
similar steps 
to estimate $f_{2}$  and $q$ and $Q$.

We use the estimate  \eqref{est:f2_on_gamma23} for $|f_2|$,
\eqref{est:fastq_on_gamma23} for $1/|q|$ 
and \eqref{est:Q_on_gamma124} for $Q$.
We also use $\left|\xi\right|=1+\delta$,
so 
\begin{multline*}
\left|f_{2}\left(\Phi_{1}\left(\xi\right)\right)\right|
\frac{1}{\left|q\left(\xi\right)\right|}
\left|Q\left(\Phi_{1}\left(\xi\right)\right)\right|
\le 
\left\Vert f_{2}\right\Vert _{\Gamma}
\exp\left(\sum_{j=1}^{N_2} \log \left|H(\beta_j',\xi)\right|\right)
\\ \cdot
\frac{1}{|\xi-1|^{2}}
\frac{1}{\left(1+\cfive\delta\right)^{N_5-N_2 -2}}
\ \cdot\, 
\cthirteen \exp\left(
-\cfourteen (\cnine/2)^\tau \,
N_3 \delta_{2,3}^{\tau/2}\right)
\le
\end{multline*}
which we continue using $1/2\, \sqrt{\delta}\le |\xi-1|\le 2\sqrt{\delta}$ (see \eqref{eq:w_star_dist})
and $1/(2n)=\delta_{2,1}\le \delta\le \delta_{2,3}$
and \eqref{eq:loghest} for the sum. 
So
\begin{equation*}
\le
\left\Vert f_{2}\right\Vert _{\Gamma}
8 
n^2 \;1\;
\exp\left(\clogh N_2\delta_{2,3}\, 2\sqrt{\delta_{2,3}}\right)
\cthirteen \exp\left(
-\cfourteen (\cnine/2)^\tau \,
N_3 \delta_{2,3}^{\tau/2}\right)
 \le 
\end{equation*}
which tends to $0$ if $N_3\delta_{2,3}^{\tau/2}\rightarrow\infty$ and we can continue this estimate using $\delta_{2,3}= n^{-2/3}$, so
\begin{multline*}
\leq
\left\Vert f_{2}\right\Vert _{\Gamma}
8 
\cthirteen e^{2\clogh} \; 
n^2 \exp\left(
-\cfourteen \cnine^\tau 2^{-3\tau/2} \,
N_3 n^{-\tau/2}\right)
\\
\leq 
\left\Vert f_{2}\right\Vert _{\Gamma}
8 
\cthirteen e^{2\clogh} \; 
n^2 \exp\left(
-\cfourteen \cnine^\tau 2^{-3\tau/2} \,
 n^{1/12}\right)
=:
\left\Vert f_{2}\right\Vert _{\Gamma}
E_{2,2}.
\end{multline*}
Now continuing \eqref{eq:second_term_main_est}
we write
\begin{equation*}
\le \frac{1}{2\pi}
\int_{\Gamma_{2,2}}
4 \cdot 2n \left\Vert f_{2}\right\Vert _{\Gamma}
E_{2,2}
|d\xi|
\leq
\frac{2}{\pi}
\; n \delta_{2,3} E_{2,2}
\left\Vert f_{2}\right\Vert _{\Gamma}
\end{equation*}
where $\delta_{2,3}= n^{-2/3}$ 
and $n E_{2,2}\rightarrow 0$.

Summarizing these estimates on $\Gamma_{2,1}$, $\Gamma_{2,3}$ and
$\Gamma_{2,2}$ (and also on $\Gamma_{2,4}$), 
   and using \eqref{eq:f_two_norm_est} with the exponential decay of $E_{2,1},\ldots,E_{2,4}$, 
we have uniformly for
$\left|v\right|\le1$, 
\[
\left|r_{2,N_5}\left(v\right)-\left(Q\cdot f_{2}\right)\circ\Phi_{1}\left(v\right)\right|
=
o\left(1\right)\left\Vert f\right\Vert _{\Gamma}
\]
where $o\left(1\right)$ tends to $0$ as $n\rightarrow\infty$ but
it is independent of $f$ and $f_{2}$. 
Obviously, $r_{2,N_5}$ is a
rational function with poles in  $\Phi_{2}^{-1}\left(Z_{2}\right)$
only with $\deg\left( r_{2,N_5}\right)=N_5=\left(1+o\left(1\right)\right)n$
and therefore we uniformly have for $\left|v\right|\le1$
\[
\left|r_{2,N_5}\left(v\right)-\left(Q\cdot f_{2}\right)\circ\Phi_{1}\left(v\right)\right|
=
o\left(1\right)\left\Vert f\right\Vert _{\Gamma},
\]
that is,
\begin{equation}
\left\Vert r_{2,N_5}-\left(Q\cdot f_{2}\right)\circ\Phi_{1}\right\Vert _{\partial\bD}
=
o\left(1\right)\left\Vert f\right\Vert _{\Gamma}
\label{eq:sup_norm_err_est_two}
\end{equation}
where $o\left(1\right)$ tends to $0$ as $n\rightarrow\infty$ but
it is independent of $f$.
Since $1$ is double zero of $q$, so we have 
\begin{equation}
r_{2,N_5}'\left(1\right)
=
\left(\left(Q\cdot f_{2}\right)\circ\Phi_{1}\right)'\left(1\right).\label{eq:p_two_n_psi_deriv_at_one}
\end{equation}

\subsubsection{A similar rational function and final estimates}

Consider the ``constructed'' rational function 
\[
h\left(v\right)
:=
\varphi_{1r}\left(v\right)+r_{1,N_5}\left(v\right)+r_{2,N_5}\left(v\right).
\]
Recall that the poles of $\varphi_{1r}$
are $\alpha_j'$, $j=1,\ldots,N_1$,
the poles of both
$r_{1,N_5}$ and $r_{2,N_5}$ 
$\beta_j'$, $j=1,\ldots,N_2$
and the poles of $Q$ are 
$\zeta_j$, $j=1,\ldots,N_4$ (counting
multiplicities).
Hence, this function $h$ has poles 
at $\alpha_j'=\Phi_1^{-1}\left(\alpha_j\right)$ (and with exactly the same multiplicities),
and $h$ has poles at $\beta_j'$-s
and here the multiplicities may change a bit. 
We use the
identity 
\[
f\circ\Phi_{1}=\left(Q\cdot f+\left(1-Q\right)\cdot f\right)\circ\Phi_{1}
\]
to calculate the derivatives as follows 
\[
\left(\left(\left(1-Q\right)\cdot f\right)\circ\Phi_{1}\right)'\left(1\right)
=
\left(\left(1-Q\right)'\cdot f\right)\left(u_{0}\right)\cdot\Phi_{1}'\left(1\right)+\left(\left(1-Q\right)\cdot f'\right)\left(u_{0}\right)\cdot\Phi_{1}'\left(1\right)
\]
where the second term is zero because of the fast decreasing rational function
($Q\left(u_{0}\right)=1$) and for the first term we can apply the rough Bernstein type inequality (Proposition \ref{prop:rough_berns}) in the following way ($\left\Vert 1-Q\right\Vert_{\Gamma}\le2$):
\[
\left|\left(1-Q\right)'\left(u_{0}\right)\right|
\le
\deg\left(Q\right)\, \crbone \, 2
=o(n)
\]
where $\deg(Q)=N_4\le n^{3/4}$. 
Here we use that 
\begin{equation}
\ctwentyone := \inf\left\{
\frac{\partial}{\partial n_2 (u)} g_{G_2}(u,\beta),
\frac{\partial}{\partial n_1 (u)} g_{G_1}(u,\alpha) :
\  u\in \Gamma,
\alpha \in Z_1, \beta\in Z_2
\right\} >0
\label{est:ndg_lower}
\end{equation}
because  $g_{G_1}(u,\alpha)=\log\left|B\left(\Phi_1^{-1}(u),\Phi_1^{-1}(\alpha)\right)\right|$, the derivatives of Blaschke factors are bounded away from $0$ in modulus ($\Phi_1^{-1}(Z_1)$ is closed and disjoint from the unit circle) 
and $\left|\Phi_1'(.)\right|$ is bounded away from $0$. Similarly for $g_{G_2}(.,.)$.  
There is a uniform upper estimate
for the normal derivatives:
\begin{equation}
\ctwentytwo := \sup\left\{
\frac{\partial}{\partial n_2 (u)} g_{G_2}(u,\beta),
\frac{\partial}{\partial n_1 (u)} g_{G_1}(u,\alpha) :
\  u\in \Gamma,
\alpha \in Z_1, \beta\in Z_2
\right\} <\infty.
\label{est:ndg_upper}
\end{equation}

Therefore
\begin{multline}
\left|\left(\left(1-Q\right)'\cdot f\right)
\left(u_{0}\right)\cdot
\Phi_{1}'\left(1\right)\right|
\le
\left\Vert f\right\Vert_{\Gamma}\crbone n^{3/4} 2
\\ \leq
\left\Vert f\right\Vert _{\Gamma}
\frac{2 n^{3/4} \crbone}{ \frac{1}{2} n \ctwentyone}
\max
\left( \sum_{j=1}^{N_2} \frac{\partial}{\partial n_{2}\left(u_{0}\right)}g_{G_{2}}\left(u_{0},\beta_j\right),
\sum_{j=1}^{N_1} \frac{\partial}{\partial n_{1}\left(u_{0}\right)}
g_{G_{1}}\left(u_{0},\alpha_j\right)\right)
\\ = 
o\left(1\right)\left\Vert f\right\Vert _{\Gamma}
\max
\left( \sum_{j=1}^{N_2} \frac{\partial}{\partial n_{2}\left(u_{0}\right)}g_{G_{2}}\left(u_{0},\beta_j\right),
\sum_{j=1}^{N_1} \frac{\partial}{\partial n_{1}\left(u_{0}\right)}
g_{G_{1}}\left(u_{0},\alpha_j\right)\right)
\label{est:one_minus_q_f_prime}
\end{multline}
where we used that
\begin{equation}
\frac{1}{2} n \ctwentyone
\leq
\max
\left( \sum_{j=1}^{N_2} \frac{\partial}{\partial n_{2}\left(u_{0}\right)}g_{G_{2}}\left(u_{0},\beta_j\right),
\sum_{j=1}^{N_1} \frac{\partial}{\partial n_{1}\left(u_{0}\right)}
g_{G_{1}}\left(u_{0},\alpha_j\right)\right).
\label{est:max_from_below}
\end{equation}
This way we need to consider
$\left(Q\cdot f\right)\circ\Phi_{1}$ only. 
The derivatives at $1$
of the original $f$ and $h$ coincide, 
because of  \eqref{eq:phi_decomp},
\eqref{eq:phi_one_e_deriv_at_one} and \eqref{eq:p_two_n_psi_deriv_at_one},
so 
\begin{equation}
h'\left(1\right)
=
\varphi_{1r}'\left(1\right)
+r_{1,N_5}'\left(1\right)
+r_{2,N_5}'\left(1\right)
=
\left(\left(Q\cdot f\right)\circ\Phi_{1}\right)'\left(1\right).\label{eq:h_qf_prime_at_one}
\end{equation}

As for the sup norms, we use \eqref{eq:phi_decomp},  \eqref{eq:phi_one_e_sup_err_est},
\eqref{eq:sup_norm_err_est_two}, 
so we write 
\begin{equation}
\left\Vert\left(Q\cdot f\right)\circ\Phi_{1}
-
h\right\Vert _{\partial\bD}
=
o\left(1\right)\left\Vert f\right\Vert _{\partial G_{2}}
\label{est:qf_h_sup_norm}
\end{equation}
where $o\left(1\right)$ tends to $0$ as $n=\deg(f)\rightarrow\infty$ but
it is independent of $f$, this follows from
the considerations after \eqref{eq:phi_one_e_sup_err_est} and
\eqref{eq:sup_norm_err_est_two}.

Now we apply  Borwein-Erdélyi inequality 
for $h$ as follows: 
\begin{multline}
\left|h'\left(1\right)\right|
\le
\left\Vert h\right\Vert_{\partial\bD}
\max\Big(
\sum_{j=1}^{N_1}
\frac{\partial}{\partial n_{1}\left(1\right)}
g_{\bD}\left(1,\alpha_j'\right)
,\\
\sum_{j=1}^{N_2}
\frac{\partial}{\partial n_{2}\left(1\right)}
g_{\bD^{*}}\left(1,\beta_j'\right)
+
\sum_{j=1}^{N_5-N_2-2}
\frac{\partial}{\partial n_{2}\left(1\right)}
g_{\bD^{*}}\left(1,\gamma_j\right)
\Big)
\label{est:main_est_for_h}
\end{multline}
where we also used the definition of $\gamma_j$'s (see \eqref{interpolqdef}).
Now we apply Proposition \ref{prop:green_deriv_unitdisk}
and \eqref{est:max_from_below}
with $N_5-N_2-2=o(n)$ and the uniform upper estimate \eqref{est:ndg_upper} 
so we can continue the main estimate (\ref{est:main_est_for_h})
\begin{multline*}
=
\left\Vert h\right\Vert _{\partial\bD} 
\max\Big( 
\sum_{j=1}^{N_1} \frac{\partial}{\partial n_{1}\left(u_{0}\right)}
g_{G_{1}}\left(u_{0},\alpha_j\right),
\\
\sum_{j=1}^{N_2} \frac{\partial}{\partial n_{2}\left(u_{0}\right)}g_{G_{2}}\left(u_{0},\beta_j\right)
+
\sum_{j=1}^{N_5-N_2-2}
\frac{\partial}{\partial n_{2}\left(u_0\right)}
g_{G_2}\left(u_0,\Phi_2^{-1}\left(\gamma_j\right)\right)
\Big)
\\ \le 
\left\Vert h\right\Vert _{\partial\bD}\left(1+o\left(1\right)\right)
\max\Big( 
\sum_{j=1}^{N_1} \frac{\partial}{\partial n_{1}\left(u_{0}\right)}
g_{G_{1}}\left(u_{0},\alpha_j\right),
\\
\sum_{j=1}^{N_2} \frac{\partial}{\partial n_{2}\left(u_{0}\right)}g_{G_{2}}\left(u_{0},\beta_j\right) \Big)
\leq
\left(1+o\left(1\right)\right)
\left\Vert f\right\Vert_{\Gamma}
\max\Big( 
\sum_{j=1}^{N_1} \frac{\partial}{\partial n_{1}\left(u_{0}\right)}
g_{G_{1}}\left(u_{0},\alpha_j\right),
\\
\sum_{j=1}^{N_2} \frac{\partial}{\partial n_{2}\left(u_{0}\right)}g_{G_{2}}\left(u_{0},\beta_j\right) \Big)
\end{multline*}
where in the last step we used \eqref{est:qf_h_sup_norm} and
the properties of $Q$.
Here, $o(1)$ tends to $0$ as $n=\deg(f)\rightarrow\infty$ but
it is independent of $f$, this follows from
the consideration \eqref{est:qf_h_sup_norm}
and $N_5-N_2-2=o(n)$.

Using that $\left|h'(1)\right|=\left| f'\left(u_0\right)\right|$ and summarizing these estimates, we have 
\begin{multline*}
\left|f'\left(1\right)\right|
\le
\left\Vert f\right\Vert_{\Gamma}
\left(1+o\left(1\right)\right)\\
\cdot\max\left(\sum_{j=1}^{N_1}
\frac{\partial}{\partial n_{1}\left(u_{0}\right)}
g_{G_{1}}\left(u_{0},\alpha_{j}\right),\ 
\sum_{j=1}^{N_2}
\frac{\partial}{\partial n_{2}\left(u_{0}\right)}
g_{G_{2}}\left(u_{0},\beta_j\right)\right)
\end{multline*}
which is the assertion of Theorem \ref{thm:analrational}.

\section{Proof of Theorem \ref{thm:analrationalarc}}

Here we use an analytic open-up
tool to transform the arc setting ($z$ plane) to the curve setting ($u$ plane).

For a Jordan curve $\Gamma$, $\mathrm{Int}\Gamma$ denotes the interior of $\Gamma$ and
$\mathrm{Ext}\Gamma$ denotes the exterior of $\Gamma$, 
$\mathrm{Ext}\Gamma := \bC_\infty \setminus \left(\Gamma \cup \mathrm{Int}\Gamma\right)$.

\begin{prop}
\label{prp:openup}
Let $\Gamma_0\subset\bC$ be an analytic Jordan arc. 
Then there exist a rational function 
$F$ and an analytic Jordan curve $\Gamma$ such that 
$F$ is a conformal bijection from $\mathrm{Int} \Gamma$ and from $\mathrm{Ext}\Gamma$ 
onto $\bC_\infty\setminus \Gamma_0$.
\end{prop}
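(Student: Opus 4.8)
The plan is to construct the open-up map explicitly from a conformal parametrization of the arc. Since $\Gamma_0$ is an analytic Jordan arc, there is a conformal map $\psi$ of a neighborhood of $[-1,1]$ onto a neighborhood of $\Gamma_0$ with $\psi([-1,1])=\Gamma_0$; the classical ``open-up'' of the interval $[-1,1]$ is the Joukowski map $J(w)=\tfrac12(w+1/w)$, which maps both $\{|w|<1\}$ and $\{|w|>1\}$ conformally onto $\bCi\setminus[-1,1]$. Composing, I would set $F:=\psi\circ J$, so that $F$ is defined and holomorphic on an annular neighborhood $\{r_1<|w|<r_2\}$ of the unit circle $\bT$, with $F(\bT)=\Gamma_0$ traversed twice. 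The candidate curve is then $\Gamma:=\{|w|=\rho\}$ for a suitable $\rho$ close to $1$ (after a preliminary normalization one may simply take $\rho=1$ if $F$ is already bijective there, but in general one shrinks to a circle on which $F$ is univalent on each side).

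The key steps, in order: (i) record the analytic parametrization $\psi$ and its univalence on a genuine two-dimensional neighborhood of $[-1,1]$, using analyticity of $\Gamma_0$ and the fact that an endpoint of an analytic arc still has a one-sided analytic parametrization extending past it; (ii) verify that $J$ restricted to $\{|w|<1\}$ and to $\{|w|>1\}\cup\{\infty\}$ is a conformal bijection onto $\bCi\setminus[-1,1]$, and that $J$ maps a small enough annulus around $\bT$ into the neighborhood where $\psi$ is univalent; (iii) choose $\rho$ so that $F=\psi\circ J$ is injective on $\{|w|<\rho\}$ and on $\{|w|>\rho\}$ separately — this follows because $F$ is a local homeomorphism near $\bT$ and agrees, side by side, with the two sheets over $\Gamma_0$; (iv) check that $F$ extends to a \emph{rational} function: since $J$ is rational and $\psi$ is only holomorphic a priori, I would instead build $F$ directly as a rational function by prescribing it as $\psi\circ J$ and then arguing that the relevant conformal map of $\mathrm{Int}\,\Gamma$ onto $\bCi\setminus\Gamma_0$ can be realized rationally — concretely, one takes $\Gamma$ itself to be the image under $\psi^{-1}$ of a circle, pulls the Joukowski map back, and observes that the composition of $J$ with the \emph{inverse} of the analytic parametrization need not be rational, so the honest route is: first map $\mathrm{Int}\,\Gamma$ and $\mathrm{Ext}\,\Gamma$ to the two components of $\bCi\setminus[-1,1]$ by Joukowski-type maps, then apply a single fixed analytic homeomorphism $[-1,1]\to\Gamma_0$; to get rationality one uses that an analytic arc admits a rational parametrization approximation — or, cleanly, one quotes the standard fact (as in the polynomial ``open-up'' literature) that for an analytic arc the map can be chosen rational by taking $\Gamma$ to be a level curve of a Green-type function and $F$ the corresponding rational conformal map.

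The main obstacle is exactly step (iv): producing a genuinely \emph{rational} $F$ rather than merely an analytic one. The natural construction gives $F=\psi\circ J$ with $\psi$ analytic but not rational, so I would argue that since $\Gamma_0$ is analytic, $\psi$ extends holomorphically to a neighborhood of $[-1,1]$ and the composite is holomorphic on a neighborhood of $\bT$ with an analytic (hence, after a conformal change of the $w$-variable, \emph{algebraic}) relation to $J$; choosing $\Gamma$ to be the boundary of a slightly enlarged domain and invoking uniformization plus the reflection principle, one reduces to the case where the two-to-one covering $F\colon\Gamma\to\Gamma_0$ is induced by a degree-two rational map, which is then extended to all of $\bCi$ by the two single-valued conformal branches on $\mathrm{Int}\,\Gamma$ and $\mathrm{Ext}\,\Gamma$. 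Once $F$ is known to be rational and a bijection on each side, the remaining verifications — that $\Gamma$ is an analytic Jordan curve and that $F(\mathrm{Int}\,\Gamma)=F(\mathrm{Ext}\,\Gamma)=\bCi\setminus\Gamma_0$ — are immediate from the construction and the open mapping theorem, so I would keep those brief.
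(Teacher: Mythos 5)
Your plan runs into a genuine gap at exactly the point you flag as step (iv), and none of the proposed repairs closes it. If you take $\Gamma$ to be a circle $\{|w|=\rho\}$ and $F=\psi\circ J$ with $\psi$ an analytic parametrization of $\Gamma_0$, then $F$ is only defined on a thin annulus around $\bT$ (where $\psi$ is univalent), so it is not even a candidate for a map of all of $\mathrm{Int}\,\Gamma$ and $\mathrm{Ext}\,\Gamma$ onto $\bCi\setminus\Gamma_0$, let alone a rational one; the appeals to ``a rational parametrization approximation'', uniformization plus reflection, or an unspecified ``standard fact'' are not arguments, and the injectivity claim in step (iii) (local homeomorphism near $\bT$ implies univalence on each side) is also unjustified. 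The proposition really does need a single globally defined rational $F$, and an analytic arc has no rational parametrization in general, so this route cannot be completed as stated.

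The construction the paper intends (via Proposition 5 of \cite{JMAA}, as hinted by ``Joukowskii mapping and a suitable linear transformation'') inverts your picture: instead of pushing $[-1,1]$ forward by a non-rational $\psi$, one pulls $\Gamma_0$ back by a fixed rational map. Let $a,b$ be the endpoints of $\Gamma_0$, let $\ell$ be the affine map with $\ell(\pm1)=\{a,b\}$, and set $F:=\ell\circ J$, a degree-two rational function whose only critical values are $a$ and $b$. Then define $\Gamma:=F^{-1}(\Gamma_0)$; it is not a circle but a curve adapted to $\Gamma_0$. Since $F:\bCi\to\bCi$ is a two-sheeted cover branched exactly over the two endpoints of the arc, $F^{-1}(\Gamma_0)$ is a Jordan curve through the two critical points; it is analytic away from them because $F$ is locally biholomorphic there, and analytic at the critical points because the local behaviour $F(u)-a\sim c\,(u-u_c)^2$ opens up the end of the analytic arc. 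Finally, $F$ maps each component of $\bCi\setminus\Gamma$ properly onto $\bCi\setminus\Gamma_0$ with no critical points, and the two mapping degrees sum to $2$, so each equals $1$, i.e.\ $F$ is a conformal bijection from $\mathrm{Int}\,\Gamma$ and from $\mathrm{Ext}\,\Gamma$ onto $\bCi\setminus\Gamma_0$. You had the right ingredients (Joukowski map, the role of the endpoints, the two-to-one structure), but the missing idea is to define $\Gamma$ as the preimage of $\Gamma_0$ under this explicit rational map rather than trying to force $\Gamma$ to be a circle.
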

This is a special case of \cite{JMAA}, Proposition 5 and actually can be established with the Joukowskii mapping
$z=J(u)=1/2(u+1/u)$ and using a suitable linear transformation.
This mapping is depicted on figure \ref{fig:openup}. 
\begin{figure}
\begin{center}
\includegraphics[keepaspectratio,width=\textwidth]{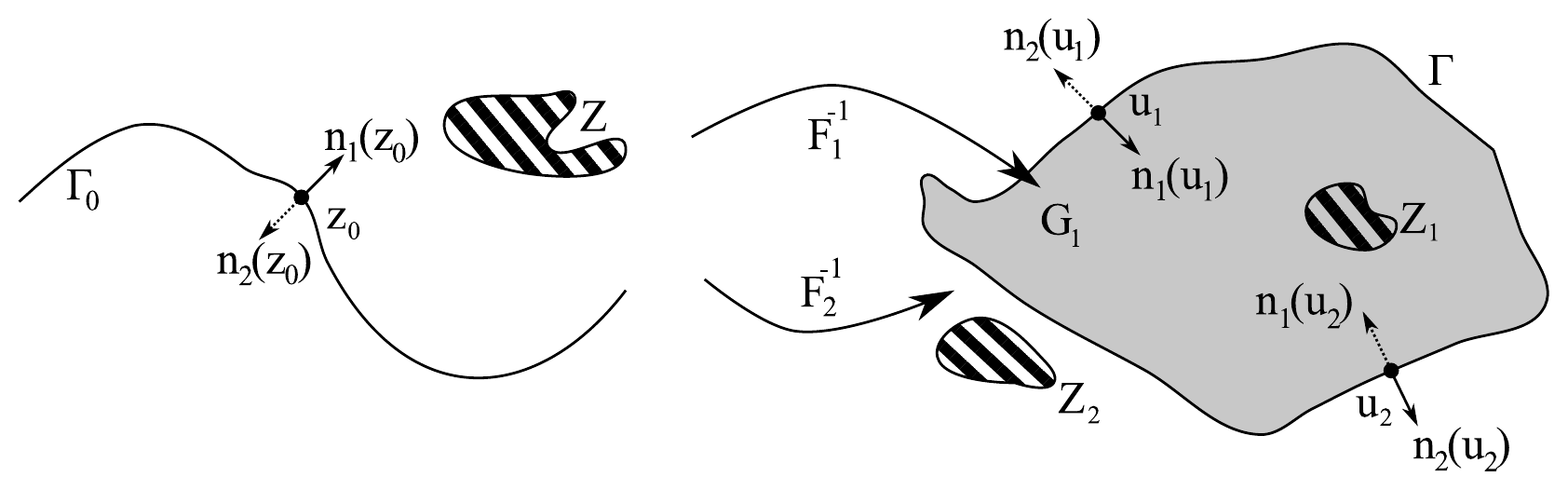}
\end{center}
\caption{The open-up}
\label{fig:openup}
\end{figure}

Denote the inverse of $z=F(u)$ restricted to $\mathrm{Int}\Gamma$ by $F_1^{-1}(z)=u$ and
that restricted to $\mathrm{Ext}\Gamma$
by $F_2^{-1}(z)=u$.

We need the mapping properties of $F$
as regards the normal vectors.
For the full details, we refer to \cite{JMAA} p. 879.
Briefly, there are exactly two $u_1,u_2\in \Gamma$, $u_1\ne u_2$ 
such that $F(u_1)=F(u_2)=z_0$.
Denote the normal vectors to $\Gamma$
pointing outward by $n_2(.)$ and the normal vectors pointing inward by $n_1(.)$.
By reindexing $u_{1}$ and $u_{2}$ and the exchanging $n_1(z_0)$ and $n_2(z_0)$, 
we may assume that the normal
vector $n_{2}\left(u_{1}\right)$ is mapped by $F$ to the normal
vector $n_{2}\left(z_0\right)$. This immediately implies that $n_{1}\left(u_{1}\right)$,
$n_{2}\left(u_{2}\right)$, $n_{1}\left(u_{2}\right)$ are mapped
by $F$ to $n_{1}\left(z_0\right)$, $n_{1}\left(z_0\right)$,
$n_{2}\left(z_0\right)$
respectively.

Moreover, we need to relate the normal derivatives of Green's functions as follows.
\begin{prop}
\label{prp:greenopenup}
Using the notations above, for the Green's
functions of $G_{2}:=\mathrm{Ext} \Gamma$ and $G_{1}:=\mathrm{Int}\Gamma$ and for $b\in\bC_{\infty}\setminus K$
we have 
\begin{multline*}
\frac{\partial}{\partial n_{1}\left(z\right)}
g_{\bCi\setminus \Gamma_0}\left(z,b\right)
=
\frac{\partial}{\partial n_{1}\left(u_{1}\right)}
g_{G_{1}}\left(u_{1},F_{1}^{-1}\left(b\right)\right)/\left|F'\left(u_{1}\right)\right|
\\ =
\frac{\partial}{\partial n_{2}\left(u_{2}\right)}
g_{G_{2}}\left(u_{2},F_{2}^{-1}\left(b\right)\right)/\left|F'\left(u_{2}\right)\right|
\end{multline*}
and, similarly for the other side,
\begin{multline*}
\frac{\partial}{\partial n_{2}\left(z\right)}
g_{\bCi\setminus \Gamma_0}\left(z,b\right)
=
\frac{\partial}{\partial n_{1}\left(u_{2}\right)}
g_{G_{1}}\left(u_{2},F_{1}^{-1}\left(b\right)\right)/\left|F'\left(u_{2}\right)\right|
\\ =
\frac{\partial}{\partial n_{2}\left(u_{1}\right)}
g_{G_{2}}\left(u_{1},F_{2}^{-1}\left(b\right)\right)/\left|F'\left(u_{1}\right)\right|.
\end{multline*}
\end{prop}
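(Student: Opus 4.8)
The plan is to reduce the proposition to the conformal invariance of Green's functions combined with the chain rule for directional derivatives, feeding in the correspondence of the normal vectors recorded just above the statement. Throughout I write $z=z_0$ (so that $z_0=F(u_1)=F(u_2)$), $G_1=\mathrm{Int}\Gamma$, $G_2=\mathrm{Ext}\Gamma$, and $D_0:=\bCi\setminus\Gamma_0$.

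First I would transport the Green's functions through $F$. Since $F$ restricts to a conformal bijection $G_1\to D_0$ and Green's functions are conformally invariant, $g_{G_1}(u,a)=g_{D_0}(F(u),F(a))$ for all $u,a\in G_1$; setting $a=F_1^{-1}(b)$ gives $g_{G_1}(u,F_1^{-1}(b))=g_{D_0}(F(u),b)$ on $G_1$, and likewise $g_{G_2}(u,F_2^{-1}(b))=g_{D_0}(F(u),b)$ on $G_2$. Because $\Gamma_0$ (hence $\Gamma$) is analytic and $F$ is a rational function extending conformally across $\Gamma$ near $u_1$ and near $u_2$ — this uses $F'(u_1),F'(u_2)\neq0$, which holds since $z_0$ is not an endpoint of $\Gamma_0$ — both sides of each identity extend harmonically past the relevant boundary arcs, so all the normal derivatives below are legitimate and the chain rule applies at boundary points.

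Next I would differentiate along the normals. Working near $u_j$, write the $G_1$-identity as $h=\tilde h\circ F$ with $h=g_{G_1}(\cdot,F_1^{-1}(b))$ and $\tilde h=g_{D_0}(\cdot,b)$; for a unit direction $v\in\bC$ the chain rule gives
\[
\frac{\partial h}{\partial v}(u_j)=|F'(u_j)|\,\frac{\partial\tilde h}{\partial w_j}(z_0),
\qquad w_j:=\frac{F'(u_j)\,v}{|F'(u_j)|},
\]
where $w_j$ is again a unit vector. Taking $v=n_1(u_1)$ and using that $F$ carries the direction $n_1(u_1)$ to $n_1(z_0)$, so $w_1=n_1(z_0)$, yields after dividing by $|F'(u_1)|$ the first equality of the proposition. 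The three remaining equalities come out the same way once the correct normal correspondence (fixed by the reindexing before the statement) is inserted: applying the displayed chain-rule identity to the $G_2$-identity at $u_2$ with $v=n_2(u_2)$ (sent to $n_1(z_0)$) gives the second expression in the first display; applying it to the $G_1$-identity at $u_2$ with $v=n_1(u_2)$ (sent to $n_2(z_0)$) and to the $G_2$-identity at $u_1$ with $v=n_2(u_1)$ (sent to $n_2(z_0)$) gives the two expressions in the second display.

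The argument is short; the points that need genuine care are the boundary regularity that licenses differentiating the transported Green's functions at points of $\Gamma$ and $\Gamma_0$ — this is exactly where analyticity of $\Gamma_0$ and the non-vanishing of $F'$ at $u_1,u_2$ are used — and, above all, keeping straight which of the four inward/outward normals at $u_1,u_2$ is sent by $F$ to which normal at $z_0$. Getting these four label combinations consistent is the part most prone to error; everything else is the standard conformal-invariance-plus-chain-rule computation.
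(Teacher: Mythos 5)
Your argument is correct: conformal invariance of the Green's functions under $F$ on each of $\mathrm{Int}\Gamma$ and $\mathrm{Ext}\Gamma$, harmonic extension across the analytic boundary arcs (using $F'(u_1),F'(u_2)\neq 0$), the chain rule for directional derivatives with the factor $\left|F'(u_j)\right|$, and the normal-vector correspondence fixed before the statement, applied in the four consistent combinations. This is essentially the same argument the paper relies on — it simply delegates these details to Proposition 6 of the cited JMAA paper rather than writing them out.
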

This proposition follows immediately from Proposition 6 from \cite{JMAA} and is a two-to-one mapping analogue of Proposition \ref{prop:green_deriv_unitdisk}.

\begin{proof}[Proof of Theorem \ref{thm:analrationalarc}.]
Use Proposition \ref{prp:openup},
and consider $f_1(u):=f\circ F(u)$ on the analytic Jordan curve $\Gamma$
at $u_1$ (where $F(u_1)=z_0$)
and put $G_1:=F_1^{-1}(\bC_\infty\setminus \Gamma_0)=\mathrm{Int}\Gamma$ and
$G_2:=F_2^{-1}(\bC_\infty\setminus \Gamma_0)=\mathrm{Ext}\Gamma$,
similarly as above.
Obviously, $G=\bCi\setminus \Gamma_0$.
We have then
\begin{multline*}
\left|f_1' \left(u_1\right)\right|
\le 
(1+o(1))
\left\|f_1\right\|_{\Gamma}
\\ \cdot
\max\left(
\sum_\beta
\frac{\partial}{\partial n_1(u_1)}g_{G_1}\left(u_1, F_1^{-1}(\beta)\right),
\sum_\beta
\frac{\partial}{\partial n_2(u_1)}g_{G_2}\left(u_1, F_2^{-1}(\beta)\right)
\right)
\end{multline*}
where $\beta$ runs through the poles of $f(z)$ (counting multiplicities).

Now use Proposition \ref{prp:greenopenup}  and $f_1'\left(u_1\right)=f'\left(z_0\right) F'(u_1)$,
hence
\begin{multline*}
\left|f' \left(z_0\right)\right|
\le 
(1+o(1))
\left\|f\right\|_{\Gamma_0}
\\ \cdot
\max\left(
\sum_\beta
\frac{\partial}{\partial n_1(z_0)}g_{G}\left(z_0, \beta\right),
\sum_\beta
\frac{\partial}{\partial n_2(z_0)}g_{G}\left(z_0, \beta\right)
\right)
\end{multline*}
which we wanted to prove.
\end{proof}

\section{Sharpness - proof of Theorem \ref{thm:analratsharp}}

The idea is as follows. On the unit circle ($v$ plane),
we use some special rational functions
for which  Borwein-Erdélyi inequality is sharp. Then we transfer it back to $\Gamma$
($u$ plane) and approximate it with rational functions. In other words, we do the  ``reconstruction step'' in the  ``opposite direction''.

Recall that $\bD^*=\{z: |z|>1\}\cup\{\infty\}$ and $B(a,v)=\frac{1-\overline{a}v}{v-a}$ is the Blaschke factor with pole at $a$.

First, we state the cases when we have equality in Borwein-Erdélyi inequality.
\begin{prop}
\label{prp:BEsharpness}
Suppose $h$ is a Blaschke product with all poles are either inside or outside the unit circle,
that is,
$h(v)=\prod_{j=1}^n B\left(\alpha_j,v\right)$ where all $\alpha_j\in\bD$,
or $h(v)=\prod_{j=1}^n B\left(\beta_j,v\right)$ where all $\beta_j\in\bD^*$.
Then
\[
\left|h'(1)\right|
=
\left\|h\right\|_{\partial \bD}
\max\left(
\sum_{\alpha}
\frac{\partial}{\partial n_{1}\left(1\right)}
g_{\bD}\left(1,\alpha\right),
\sum_{\beta}
\frac{\partial}{\partial n_{2}\left(1\right)}
g_{\bD^*}\left(1,\beta\right)
\right)
\]
where the sum with $\alpha$ (or $\beta$) is taken over all poles of $h$ in $\bD$ (or in $\bD^*$, respectively), counting multiplicities.
\end{prop}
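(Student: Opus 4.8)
The plan is to reduce the proposition to one short computation, exploiting that a finite product of Blaschke factors is unimodular on $\partial\bD$. First I would note that for any $a$ with $|a|\ne 1$ one has $|B(a,v)|=1$ whenever $|v|=1$ (there $|1-\overline{a}v|=|v|\,|\overline{v}-\overline{a}|=|v-a|$), so in both alternatives $\|h\|_{\partial\bD}=1$; a pole at $\infty$ is accommodated by reading the corresponding factor as $v$ itself, again unimodular on the circle. Next, since in the first alternative $h$ has no poles in $\bD^{*}$ and in the second no poles in $\bD$, the maximum on the right collapses to the single nonempty sum, namely $\sum_{\alpha}\frac{\partial}{\partial n_{1}(1)}g_{\bD}(1,\alpha)$ or $\sum_{\beta}\frac{\partial}{\partial n_{2}(1)}g_{\bD^{*}}(1,\beta)$.

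The heart of the argument is the value of the logarithmic derivative of a single Blaschke factor at $1$. Differentiating $\log B(a,v)=\log(1-\overline{a}v)-\log(v-a)$ gives
\[
\bigl(\log B(a,\cdot)\bigr)'(1)=\frac{-\overline{a}}{1-\overline{a}}-\frac{1}{1-a}=\frac{|a|^{2}-1}{|1-a|^{2}},
\]
which is real: negative for $|a|<1$, positive for $|a|>1$, with limiting value $1$ at $a=\infty$ (matching $\bigl(\log v\bigr)'(1)=1$). Hence $h'(1)/h(1)=\sum_{j}\bigl(\log B(a_{j},\cdot)\bigr)'(1)$ is a sum of real numbers all of the same sign, so there is no cancellation, and since $|h(1)|=1$ we obtain
\[
|h'(1)|=\sum_{j}\frac{\bigl|\,|a_{j}|^{2}-1\,\bigr|}{|1-a_{j}|^{2}}.
\]

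Finally I would identify this quantity with the claimed sum of normal derivatives. By Proposition \ref{prop:green_deriv_unitdisk} (equivalently, from $g_{\bD}(v,\alpha)=\log|B(\alpha,v)|$ and $g_{\bD^{*}}(v,\beta)=\log|B(\beta,v)|$), one has $\frac{\partial}{\partial n_{1}(1)}g_{\bD}(1,\alpha)=\frac{1-|\alpha|^{2}}{|1-\alpha|^{2}}$ for $\alpha\in\bD$ and $\frac{\partial}{\partial n_{2}(1)}g_{\bD^{*}}(1,\beta)=\frac{|\beta|^{2}-1}{|1-\beta|^{2}}$ for $\beta\in\bD^{*}$ (with value $1$ at $\beta=\infty$); these are precisely the summands above, which completes the argument. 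I do not expect a genuine obstacle here; the only points needing care are the sign bookkeeping that lets one replace the modulus of a sum by the sum of the moduli, and the degenerate pole at $\infty$ in the second alternative, both handled as indicated.
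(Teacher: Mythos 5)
Your proof is correct, but it takes a different route from the paper: the paper does not prove this proposition at all, it simply observes that the equality cases are contained in the Borwein--Erd\'elyi inequality as stated in \cite{BorweinErdelyi}, pp.~324--326, whereas you give a self-contained computation. Your argument is sound at every step: unimodularity of each factor on $\partial\bD$ gives $\|h\|_{\partial\bD}=|h(1)|=1$ (so dividing by $h(1)$ is legitimate); the logarithmic derivative $\bigl(\log B(a,\cdot)\bigr)'(1)=\frac{|a|^{2}-1}{|1-a|^{2}}$ is real and has the same sign for all poles on one side of the circle, so $|h'(1)/h(1)|$ is the sum of the moduli of the individual terms; and these moduli are exactly the normal derivatives $\frac{1-|\alpha|^{2}}{|1-\alpha|^{2}}$, respectively $\frac{|\beta|^{2}-1}{|1-\beta|^{2}}$ (with the value $1$ at $\beta=\infty$, handled by reading that factor as $v$), in agreement with Proposition \ref{prop:green_deriv_unitdisk}; finally the max collapses because the other sum is empty. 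What your approach buys is transparency and independence from the cited source: it exhibits precisely why one-sided Blaschke products are extremal (no cancellation among same-sign logarithmic derivatives), which is the standard way sharpness of the Borwein--Erd\'elyi inequality is verified; what the paper's citation buys is brevity and the reassurance that the equality statement is exactly the one recorded in \cite{BorweinErdelyi}. Either way the proposition stands, and your computation could replace the citation without loss.
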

This proposition is contained in  Borwein-Erdélyi inequality as stated in \cite{BorweinErdelyi} pp. 324-326.

\bigskip

The proof starts as follows. Take $n$ (not necessarily different) points from 
$\Phi_1^{-1}\left(Z_1\right)$, denote them by $\alpha_1,\ldots,\alpha_n$ and let
\[
h_n(v):=\prod_{j=1}^n 
B\left(\alpha_j,v\right).
\]
Obviously $\left\|h_n\right\|_{\partial \bD}=1$.
Now we 
 ``transfer''  $h_n$ to $u$ plane.

Let $f_{1,n}(u)$ be the sum of principal parts of $h_n\left(\Phi_1^{-1}(u)\right)$. 
And approximate $h_n\left(\Phi_1^{-1}(u)\right) - f_{1,n}(u)$ with rational functions with poles outside
$\Gamma$ 
as follows.

It is immediate that  
\[
\varphi_{e}(u):=h_n\left(\Phi_1^{-1}(u)\right) - f_{1,n}(u)
\]
is holomorphic in $G_{1}^+:=\left\{ \Phi_1(v):\,|v|<1+\delta_{1}\right\}$.

Fix any $\zeta_0\in Z_2$ arbitrarily. 
Take a rational function $w=\psi(u)$ such that $\psi\left(\zeta_0\right)=\infty$ and $\deg\psi=1$.
Hence $\psi$ is a conformal mapping from $\bC_\infty$ onto itself.
Let $\Gamma_w:=\psi\left(\Gamma\right)$ and 
$G_{1,w}^+:=\psi\left(G_1^+\right)$ and $w_0:=\psi\left(u_0\right)$.
Fix a smooth Jordan curve $\Gamma_{w+}$ such that $\Gamma_w\subset\mathrm{Int} \Gamma_{w+}$,
$\Gamma_{w+}\subset G_{1,w}^+$ and
$\psi\left(Z_2\right)\cap \left(\Gamma_{w+} \cup \mathrm{Int}\Gamma_{w+} \right)=\emptyset$.
These are depicted on figure \ref{fig:sharpthree}.
\begin{figure}
\begin{center}
\includegraphics[keepaspectratio,width=\textwidth]{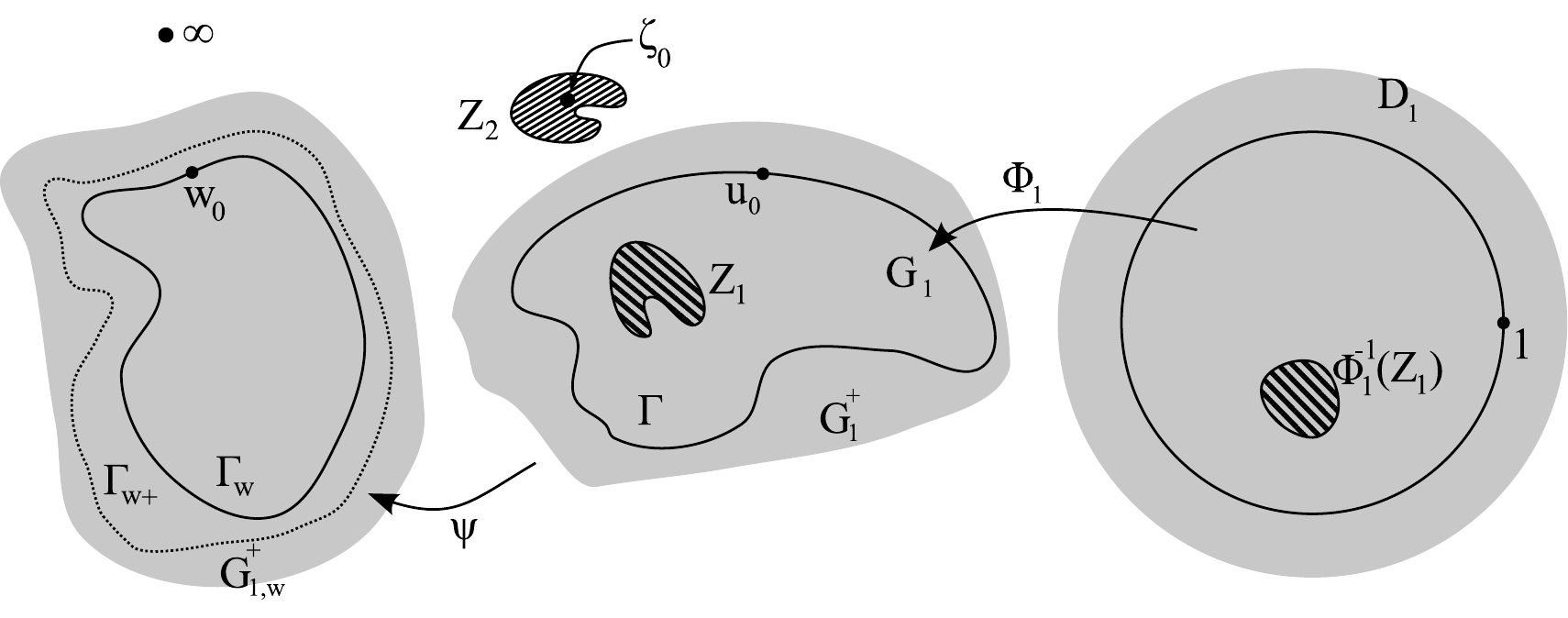}
\end{center}
\caption{The three planes $w$, $u$ and $v$ and
the mappings}
\label{fig:sharpthree}
\end{figure}

Put
\[
N_6:=[n^{4/5}]
\]
and denote a Fekete polynomial (monic polynomial with zeros at a Fekete point set)
of $\Gamma_w$ with degree $N_6$ by $P(w)$.

We are going to use the sharpness of Bernstein-Walsh lemma (see, e.g. \cite{Ransford}, Theorem 5.5.7 (b), p. 156)
therefore we put
\begin{align*}
\cdeltathree &:=
\inf \left\{g_{\bC_\infty\setminus \Gamma_w}\left(w,\infty\right): \  
w\in\Gamma_{w+}\right\} > 0,
\\
\csharpone &:= \sup\left\{\tau(w,\infty):\ w\in\Gamma_{w+}\right\} <\infty
\end{align*}
where $\tau(.,.)$ denotes the Harnack distance.

It is known that the $n$-th diameter $\delta_{N_6}$ of $\Gamma_w$ is close to the capacity $\mathrm{cap}(\Gamma_w)$, (see Fekete-Szegő theorem \cite{Ransford}, Theorem 5.5.2 on page 153), since $\delta_{N_6}\rightarrow \mathrm{cap}(\Gamma_{w})$.
Hence $N_6$ is large, then
$\csharpone \log\frac{\delta_{N_6}}{\mathrm{cap}(\Gamma_w)} < \cdeltathree /2$ and
$g(w,\infty)-\tau(w,\infty) \log \frac{\delta_{N_6}}{\mathrm{cap}(\Gamma_w)} >  \cdeltathree /2$.
This way we have for large $N_6$ that
\begin{equation}
\label{est:Plower}
\left|P(w)\right| \geq
\left\|P\right\|_{\Gamma_w} 
 \exp\left( N_6 \cdeltathree/2\right), 
\quad  (w\in \Gamma_{w+}).
\end{equation}

Let 
\[
q(w):=P(w)
\left(w-w_0\right)^2,
\]
this $q$ is a polynomial with degree $N_6+2$ and 
$w_0$ is (at least) a double zero.
Introduce
\[
f_{2,w}(w):=
\frac{1}{2\pi i}
\int_{\Gamma_{w+}}
\frac{\varphi_{e}\left(\psi^{-1}(w)\right)}{q\left(\xi\right)}
\frac{q\left(w\right)-q\left(\xi\right)}{w-\xi}d\xi
\]
where $w\in \mathrm{Int}\Gamma_{w+}$
and this $f_{2,w}(.)$ is a polynomial with degree $N_6+2$.
Here, $f_{2,w}'\left(w_0\right)=\varphi_e'\left(u_0\right) / \psi'\left(u_0\right)$.

The error of interpolation on the $w$ plane is
\begin{equation*}
\varphi_{e}\left(\psi^{-1}(w)\right)-f_{2,w}\left(w\right) 
=  \frac{1}{2\pi i}
\int_{\Gamma_{w+}}
\frac{\varphi_{e}\left(\psi^{-1}(\xi)\right)}{\xi-w}
\frac{q\left(w\right)}{q\left(\xi\right)}d\xi,
\end{equation*}
where $w\in \mathrm{Int}\Gamma_{w+}$.
It can be estimated from above as follows if $w\in \Gamma_w$
\begin{multline}
\left|\varphi_{e}\left(\psi^{-1}(w)\right)-f_{2,w}\left(w\right) \right|
\le \frac{1}{2\pi}
\int_{\Gamma_{w+}}
\left|\frac{\varphi_{e}\left(\psi^{-1}(\xi)\right)}{\xi-w}\right|
\left|\frac{q\left(w\right)}{q\left(\xi\right)}\right|
\left|d\xi\right|
\\ \le
\frac{1}{2\pi}
\int_{\Gamma_{w+}}
\left|\varphi_{e}\left(\psi^{-1}(\xi)\right)\right|
\left|\frac{P\left(w\right)}{P\left(\xi\right)}\right| \csharpthree
\frac{1}{|\xi-w|\left|\xi-w_0\right|^2} |d\xi|
\label{est:sharpf2error}
\end{multline}
where $\csharpthree>0$ (actually $\csharpthree=\mathrm{diam}\left(\Gamma_{w+}\right)^2$)
and we continue this estimate later.
Here $\left|\varphi_e\left(\psi^{-1}(\xi)\right)\right|$, $\xi\in\Gamma_{w+}$
can be estimated using that $h_n(v)$ is a Blaschke product with all poles in the unit disk,
therefore $\left\|h_n\circ\Psi_1^{-1}\circ \psi^{-1}\right\|_{\Gamma_{w+}} \le 1$ and using the Gonchar-Grigorjan type 
estimate (see Theorem 1 in \cite{multiplygg}) on 
$\mathrm{Int} \Gamma$ for $h_n\circ \Phi_1^{-1}$ which implies that 
$\left\|f_{1,n}\right\|_{\Gamma}
\le \constgg(\Gamma) 
\log(n) \left\| h_n\circ\Phi_1^{-1}\right\|_\Gamma
= \constgg(\Gamma) \log(n)$
and, since $f_{1,n}$ has poles in $\mathrm{Int}\Gamma$, the maximum principle yields $\left\|f_{1,n}\right\|_{\Gamma_w}
\ge\left\|f_{1,n}\right\|_{\Gamma_{w+}}$.
Hence 
\begin{multline}
\left|\varphi_e\left(\psi^{-1}(\xi)\right)\right|
\le
\left\|h_n\circ\Psi_1^{-1}\circ \psi^{-1}\right\|_{\Gamma_{w+}}
+
\left\|f_{1,n}\right\|_{\Gamma_{w+}}
\\ \le
1+\constgg(\Gamma) \log(n)=O(\log(n))
\label{est:phieongammawp}
\end{multline}
where $n$ is large enough (depending on $\Gamma$ only).
We also use
\[
\frac{\csharpthree}{2\pi}
\int_{\Gamma_{w+}}
\frac{1}{|\xi-w| \left|\xi-w_0\right|^2  } |d\xi|
\le \csharptwo
\] 
for some constant $\csharptwo>0$
depending on $\Gamma$ and $\Gamma_{w+}$,
since $w\in\Gamma_w$, and $\Gamma_{w+}$ is fixed and is from positive distance from  $\Gamma_w$.
Finally, we estimate $|q(w)/q(\xi)|$ using \eqref{est:Plower} (when $w\in \Gamma_w$, $\xi\in\Gamma_{w+}$),
whence
\[
\left|\frac{q(w)}{q(\xi)}\right|
\le
\frac{\left\|P\right\|_{\Gamma_w}}
{\left\|P\right\|_{\Gamma_w} 
 e^{N_6 \cdeltathree/2}}
=  e^{-N_6 \cdeltathree /2}.
\]
Putting these estimates together,
we can finish \eqref{est:sharpf2error}
\begin{equation*}
\le O\left( \log(n) 
e^{-N_6 \cdeltathree /2}\right).
\end{equation*}
Now substituting $w=\psi(u)$,
we can write
\begin{equation}
\left\|\varphi_e-f_{2,w}\circ\psi\right\|_{\Gamma}
\le  O\left( \log(n) 
e^{-N_6 \cdeltathree /2}\right)
\label{est:f2ongamma}
\end{equation}
where $\delta_3$ and the constant in $O(.)$ is independent of $h_n$
and $n$ and depends only on $\Gamma$ and $\Gamma_{w+}$ only.

\medskip

Finally, let 
\[
f_n(u):=f_{1,n}(u)+f_{2,w}\left(\psi(u)\right)
\]
this is a rational function with poles in $\mathrm{Int}\Gamma$ with total order $n$ and one pole in the exterior of $\Gamma$ (at $\zeta_0$) with order $N_6+2=O(n^{4/5})=o(n)$.
Moreover
\[
\left|f_n'\left(u_0\right)\right|
= 
\left|f_{1,n}'\left(u_0\right)
+ \left(\varphi_e'\left(u_0\right)/\psi'\left(u_0\right) \right) \psi'\left(u_0\right)
\right|=
\left|h_n'(1)\right|
\]
since $\left|\Phi_1'(1)\right|=1$.
We know that $\left\|h_n\right\|_{\partial\bD}=1$ and
using \eqref{est:f2ongamma},
\begin{multline*}
\left\|f_n\right\|_\Gamma
= \left\| f_{1,n} + f_{2,n}\circ\psi \right\|_\Gamma
=\left\| f_{1,n} + \varphi_e + f_{2,n}\circ\psi -\varphi_e \right\|_\Gamma
\\= 
\left\| h_n\circ\Phi_1^{-1} + f_{2,n}\circ\psi -\varphi_e \right\|_\Gamma
=1\pm O\left( \log(n) 
e^{-N_6 \cdeltathree /2}\right)
\end{multline*}
therefore $\left\|f_n\right\|_\Gamma\le (1+o(1)) \left\|h_n\right\|_{\partial\bD}$.

We use Propostition  \ref{prp:BEsharpness} for $h_n$, hence
\begin{multline*}
\left|f_n'\left(u_0\right)\right|
=
\left|h_n'(1)\right|
\\ =
\left\|h_n\right\|_{\partial \bD}
\max\left(
\sum_{\alpha}
\frac{\partial}{\partial n_{1}\left(1\right)}
g_{\bD}\left(1,\alpha\right),
\sum_{\beta}
\frac{\partial}{\partial n_{2}\left(1\right)}
g_{\bD^*}\left(1,\beta\right)
\right)
\\ \ge 
(1-o(1))\left\|f_n\right\|_\Gamma
\max\left(
\sum_{\alpha}
\frac{\partial}{\partial n_{1}\left(1\right)}
g_{\bD}\left(1,\alpha\right),
\sum_{\beta}
\frac{\partial}{\partial n_{2}\left(1\right)}
g_{\bD^*}\left(1,\beta\right)
\right)
\end{multline*}
where actually the second term in the max is void ($h_n$ has poles only inside the unit disk) and
by Proposition \ref{prop:green_deriv_unitdisk},
so
\begin{multline*}
\max\left(
\sum_{\alpha}
\frac{\partial}{\partial n_{1}\left(1\right)}
g_{\bD}\left(1,\alpha\right),
\sum_{\beta}
\frac{\partial}{\partial n_{2}\left(1\right)}
g_{\bD^*}\left(1,\beta\right)
\right)
=
\sum_{\alpha}
\frac{\partial}{\partial n_{1}\left(1\right)}
g_{\bD}\left(1,\alpha\right)
\\ =
\sum_{\alpha}
\frac{\partial}{\partial n_{1}\left(u_0\right)}
g_{G_1}\left(u_0,\Phi_1(\alpha)\right)
\\=
\max\left(
\sum_{\alpha}
\frac{\partial}{\partial n_{1}\left(u_0\right)}
g_{G_1}\left(u_0,\Phi_1(\alpha)\right),
\left( N_6+2 \right)
\frac{\partial}{\partial n_{2}\left(u_0\right)}
g_{G_2}\left(u_0,\zeta_0\right)
\right)
\end{multline*}
in the last step we used that the first term in the max is $O(n)$ and the second one is $o(n)$, hence if $n$ is large depending on $\Gamma$, then the last equality holds.

This finishes the proof of Theorem \ref{thm:analratsharp}.

\section{Acknowledgement}

This research was partially done 
when the first author had a postdoctoral position at the Bolyai
Institute, University of Szeged, Aradi v. tere 1, Szeged 6720, Hungary and was supported by the European Research Council Advanced
grant No. 267055.
He was partially supported by the Russian Science Foundation under grant 14-11-00022 too (Theorem \ref{thm:analratsharp}).

The authors would like to thank Vilmos Totik for the interest and
useful discussions.


\providecommand{\bysame}{\leavevmode\hbox to3em{\hrulefill}\thinspace}
\providecommand{\MR}{\relax\ifhmode\unskip\space\fi MR }
\providecommand{\MRhref}[2]{%
  \href{http://www.ams.org/mathscinet-getitem?mr=#1}{#2}
}
\providecommand{\href}[2]{#2}

\bigskip

Sergei Kalmykov
\\
%
Department of Mathematics, Shanghai Jiao Tong University,
800 Dongchuan RD, Shanghai, 200240, China 
\\
and
\\
Far Eastern Federal University, 8 Sukhanova Street, Vladivostok, 690950,
Russia 
\\
email address: \href{mailto:sergeykalmykov@inbox.ru}{sergeykalmykov@inbox.ru}

\medskip{}

Béla Nagy
\\
MTA-SZTE Analysis and Stochastics Research Group, Bolyai Institute,
University of Szeged, Szeged, Aradi v. tere 1, 6720, Hungary
\\
email address: \href{mailto:nbela@math.u-szeged.hu}{nbela@math.u-szeged.hu}

\end{document}